\def\Ddots{\mathinner{\mkern1mu\raise\p@
\vbox{\kern7\p@\hbox{.}}\mkern2mu
\raise4\p@\hbox{.}\mkern2mu\raise7\p@\hbox{.}\mkern1mu}}
\titleformat*{\subsection}{\Large\bfseries}
\titleformat*{\subsubsection}{\large\bfseries}
\titleformat*{\paragraph}{\large\bfseries}
\titleformat*{\subparagraph}{\large\bfseries}
\theoremstyle{Theorem}
\newtheorem{thm}{Theorem}[section]
\newtheorem{lem}[thm]{Lemma}
\newtheorem{cor}[thm]{Corollary}
\theoremstyle{definition}
\newtheorem{defn}[thm]{Definition}
\newtheorem{note}[thm]{Note}
\newtheorem{rem}[thm]{Remark}
\newtheorem{example}[thm]{Example}
\newcommand{\N}{\mathbb{N}}
\newcommand{\Po}{\mathbb{P}}
\newcommand{\F}{\mathcal{F}}
\newcommand{\G}{\mathcal{G}}
\date{\vspace{-5ex}}
\begin{document}

\title{{\bf Nonlinear Kernel Partition Regularity: Necessary and Sufficient  Conditions}}

\author{ 
Sayan Goswami\\  \textit{ sayan92m@gmail.com}\footnote{Ramakrishna Mission Vivekananda Educational and Research Institute, Belur Math,
Howrah, West Benagal-711202, India.}
}


\makeatother

\providecommand{\corollaryname}{Corollary}
\providecommand{\definitionname}{Definition}
\providecommand{\examplename}{Example}
\providecommand{\factname}{Fact}
\providecommand{\lemmaname}{Lemma}
\providecommand{\propositionname}{Proposition}
\providecommand{\remarkname}{Remark}
\providecommand{\theoremname}{Theorem}

\maketitle
\begin{abstract}
A matrix \( A \) is called \emph{kernel partition regular} if, for every finite coloring of the natural numbers \( \mathbb{N} \), there exists a monochromatic solution to the equation \( A\vec{X} = 0 \). In 1933, Rado characterized such matrices by showing that a matrix is kernel partition regular if and only if it satisfies the so-called \emph{column condition}. In this article, we investigate polynomial extensions of Rado’s theorem by studying systems of nonlinear equations of the form $A \vec{X} + P(z) = \vec{0},$ where $A$ is a matrix with integer entries and 
$P$ is a finite set of polynomials in one variable with no constant term.  We present several nonlinear systems of equations that are kernel partition regular, showing that the classical column condition still guarantees kernel partition regularity, even when the system is extended by adding a nonlinear polynomial term. We then establish a structural necessary condition for the partition regularity of nonlinear Rado-type systems, extending the classical column condition to a nonlinear setting. This condition generalizes Rado’s classical column condition by exploring the dependencies between the linear and higher-degree polynomial components of the system.
\end{abstract}
{\bf Mathematics subject classification 2020:} 05D10, 05C55, 22A15, 54D35.\\
{\bf Keywords:} Nonlinear partition regularity, Rado's column condition, Kernel partition regularity, Central Sets, Ramsey theory.

\section{Introduction}
Van der Waerden \ achieved one of the earliest breakthroughs in combinatorial number theory cite{vdw}, who proved that every finite coloring of the natural numbers contains arbitrarily long arithmetic progressions in at least one color class. Rado significantly extended this foundational result \cite{rado}, who provided a complete characterization of homogeneous linear systems that admit monochromatic solutions under any finite partition of \( \mathbb{N} \). In this article, we use the term \emph{partition regular} to indicate that a certain structure can be found monochromatically under any finite coloring of \( \mathbb{N} \). For instance, a collection \( \mathcal{F} \) of subsets of \( \mathbb{N} \) is said to be \emph{partition regular} if, for every finite coloring of \( \mathbb{N} \), there exists a set \( F \in \mathcal{F} \) that lies entirely within a single color class. Similarly, an equation of the form \( F(x_1, \ldots, x_n) = 0 \) over \( \mathbb{N} \) is said to be \emph{partition regular} if, for every finite coloring of \( \mathbb{N} \), the equation admits a monochromatic solution.

To state Rado's classification theorem, we first recall the definition of the \emph{column condition}.

\begin{defn}[\textbf{Column Condition}]
Let \( m, n \in \mathbb{N} \), and let \( A \) be an \( m \times n \) matrix with integer entries. Denote the columns of \( A \) by \( (C_1, \ldots, C_n) \). The matrix \( A \) is said to satisfy the \emph{column condition} if there exists a partition of the index set \( \{1, 2, \ldots, n\} \) into subsets \( I_1 \cup \cdots \cup I_v \) such that:
\begin{itemize}
    \item \( \sum_{j \in I_1} C_j = \vec{0} \), and
    \item for each \( t > 1 \), the sum \( \sum_{j \in I_t} C_j \) can be expressed as a rational linear combination of the columns \( \{ C_j : j \in I_1 \cup \cdots \cup I_{t-1} \} \).
\end{itemize}
\end{defn}

A matrix \( A \) is called \emph{kernel partition regular} if, for every finite coloring of \( \mathbb{N} \), the equation \( A\vec{X} = 0 \) admits a monochromatic solution. Rado's theorem asserts that a matrix \( A \) is kernel partition regular if and only if it satisfies the column condition. For some recent development on kernel partition regularity we refer to the articles \cite{new2, new1}.

To obtain an arithmetic progression of length \( m \), one considers the matrix
\[
A =
\begin{pmatrix}
    1 & 1 & -1 & 0 & \cdots & 0 \\
    1 & 2 & 0 & -1 & \cdots & 0 \\
    \vdots & \vdots & \vdots & \vdots & \ddots & \vdots \\
    1 & m & 0 & 0 & \cdots & -1 \\
\end{pmatrix},
\]
which corresponds to the statement of van der Waerden’s theorem. Another classical example is the matrix \( (1 \;\; 1 \;\; -1) \), which satisfies the column condition and encodes one of the earliest results in Ramsey theory: \emph{Schur's theorem}\cite{schur}. This theorem asserts that the pattern \( \{x, y, x + y\} \) is partition regular. By applying the map \( n \mapsto 2^n \), one immediately obtains the multiplicative analogue of Schur's theorem, which states that the pattern \( \{x, y, x \cdot y\} \) is partition regular. In \cite{comb}, Csikvári, Gyarmati, and Sárközy conjectured that the equation \( a + b = c \cdot d \) is partition regular. This conjecture was independently resolved by Bergelson and Hindman in \cite{pissa, h2}, using tools from the algebraic structure of the Stone–Čech compactification of discrete semigroups. In this article, we study polynomial generalizations of two classical results: Rado’s characterization of kernel partition regularity and the theorem asserting the partition regularity of the equation \( a + b = c \cdot d \).

The study of polynomial generalizations of classical results in Ramsey theory is inspired by \cite{2, 30}, where the authors established the \emph{polynomial van der Waerden theorem}. Let \( \mathbb{P} \) denote the set of polynomials with rational coefficients and zero constant term. The polynomial van der Waerden theorem states that for any finite subset \( F \subset \mathbb{P} \), and for any finite coloring of \( \mathbb{N} \), there exist \( a, d \in \mathbb{N} \) such that the set \( \{ a + P(d) : P \in F \} \) is monochromatic.

\medskip

The first goal of this article is to demonstrate the existence of certain nonlinear systems of equations that are partition regular. To achieve this, we extend the sum-equals-product theorem to a polynomial setting. In addition, we establish an infinitary version of the sum-equals-product theorem.
 Our approach involves a novel technique using a special class of ultrafilters that are simultaneously large with respect to both addition and multiplication\footnote{A detailed discussion of these ultrafilters is provided later.}. This method enables us to fuse additive polynomial structures with multiplicatively large sets, ultimately yielding the desired results.

 For example, one of our results shows that for any polynomial \( P \in \mathbb{P} \), the following system of equations is partition regular:
\[
\begin{aligned}
    x \cdot y &= z_1 + P(z) \\
    x \cdot y^2 &= z_2 + P(z)
\end{aligned}
\]

Another example of our approach reveals that the following equation is partition regular.

\[
\begin{aligned}
    x_1 + x_2 &= y_1\cdot z \\
    x_1 + 2x_2 &= y_2\cdot z.
\end{aligned}
\]

\medskip

To extend Rado's result in the nonlinear settings, we introduce a class of nonlinear systems.
\begin{defn}[\textbf{Nonlinear Rado System}]\label{newdef}
    Let \( A \) be an \( m \times n \) matrix (with \( n \geq 2 \)) with integer coefficients that satisfies the column condition, and let \( P_1, \ldots, P_m \) be elements of \( \mathbb{P} \), the set of rational-coefficient polynomials with zero constant term. Consider the system of equations:
    \begin{align*}
        a_{1,1}x_1 + \cdots + a_{1,n-1}x_{n-1} + a_{1,n} \cdot y_1 + P_1(z) &= 0 \\
        &\vdots \\
        a_{m,1}x_1 + \cdots + a_{m,n-1}x_{n-1} + a_{m,n} \cdot y_m + P_m(z) &= 0
    \end{align*}
    This system is called a \emph{Nonlinear Rado System}.
\end{defn}

Note that, in contrast to the linear case where one typically sets \( y_1 = \cdots = y_m \), here we allow the variables \( y_1, \ldots, y_m \) to be distinct. This distinction is necessary due to the nonlinear terms \( P_i(z) \). However, if all the polynomials \( P_i \) are identically zero, then the system reduces to the linear case, and we may take \( y_1 = \cdots = y_m \). We can summarize certain nonlinear Rado systems using a matrix framework. Given any matrix \( A = (a_{i,j}) \) of size \( m \times n \), we define the \emph{expanded matrix} of \( A \) as follows:
 \[
E(A) =
\begin{pmatrix}
    a_{1,1} & \cdots & a_{1,n-1} & a_{1,n} & 0 &\cdots & 0 \\
    a_{2,1} & \cdots & a_{2,n-1} & 0 &a_{2,n}& \cdots & 0 \\
    \vdots & \vdots & \vdots & \vdots & \vdots & \ddots & \vdots \\
    a_{m,1} & \cdots & a_{m,n-1} & 0 & 0 & \cdots & a_{m,n} \\
\end{pmatrix}.
\]
If \( P_1, P_2, \ldots, P_m \) are elements of \( \Po \), then we define the column matrix \( P \) as
\[
P = \begin{pmatrix}
    P_1\\
    P_2\\
    \vdots \\
    P_m
\end{pmatrix}.
\]
 Then, the system of equations described in Definition~\ref{newdef} can be expressed compactly as
\[
E(A)\vec{X} + P(z) = 0.
\]
Our second main result establishes that every \emph{nonlinear Rado system} is partition regular. In other words, if the matrix \( A \) satisfies the column condition, then the equation
\[
E(A)\vec{X} + P(z) = 0
\]
is partition regular.

\medskip

\subsection{Towards a Necessary Condition}

Our third major result concerns identifying a necessary condition under which a system of polynomial equations remains partition regular.

\medskip
\begin{example}

Before stating our main result, let us examine the following system of equations:
\begin{equation} \label{eq:nonlinear-system}
\begin{cases}
    F_1(x,y,z)=x-y+5z+z^2=0,\\
    F_2(x,y,z)=x+2y-3z=0.
\end{cases}
\tag{A}
\end{equation}
By the polynomial van der Waerden theorem, the first equation, and from Rado's theorem, the second equation is partition regular. However, our main theorem in this section shows that the system, when considered as a whole, fails to be partition regular.
\end{example}

\medskip

Let \( A = (a_{i,j}) \) be an \( m \times l \) matrix, and let \( P = \{P_1, \ldots, P_m\} \subset \mathbb{P} \) be a fixed finite set of polynomials. Let \( n \) denote the maximum degree among the polynomials in \( P \). For any \( z \in \mathbb{N} \), the vector \( P(z) \in \mathbb{Z}^m \) can be expressed as:
\[
P(z) =
\begin{pmatrix}
a_{1,l+1} \\
a_{2,l+1} \\
\vdots \\
a_{m,l+1}
\end{pmatrix} z +
\begin{pmatrix}
a_{1,l+2} \\
a_{2,l+2} \\
\vdots \\
a_{m,l+2}
\end{pmatrix} z^2 +
\cdots +
\begin{pmatrix}
a_{1,l+n} \\
a_{2,l+n} \\
\vdots \\
a_{m,l+n}
\end{pmatrix} z^n.
\]
Here, for each \( 1 \leq i \leq m \), the polynomial \( P_i(z) \) takes the form
\[
P_i(z) = a_{i,l+1} z + a_{i,l+2} z^2 + \cdots + a_{i,l+n} z^n,
\]
where some of the coefficients may be zero.

Define
\[
C = \begin{pmatrix}
a_{1,l+1} & a_{1,l+2} & \cdots & a_{1,l+n} \\
a_{2,l+1} & a_{2,l+2} & \cdots & a_{2,l+n} \\
\vdots & \vdots & \cdots & \vdots \\
a_{m,l+1} & a_{m,l+2} & \cdots & a_{m,l+n}
\end{pmatrix}
\]
to be the coefficient matrix associated with the polynomial vector \( P \), and let
\[
A_{\text{aug}}(P) = \begin{pmatrix} A & C \end{pmatrix}
\]
be the augmented matrix formed by adjoining \( A \) with the coefficients of \( P \).

Define the submatrix
\[
A_{\text{aug}}^{\text{lin}}(P) =
\begin{pmatrix}
a_{1,1} & a_{1,2} & \cdots & a_{1,l+1} \\
a_{2,1} & a_{2,2} & \cdots & a_{2,l+1} \\
\vdots & \vdots & \cdots & \vdots \\
a_{m,1} & a_{m,2} & \cdots & a_{m,l+1}
\end{pmatrix}
\]
to be the part of \( A_{\text{aug}}(P) \) corresponding to the linear terms of the polynomials in \( P \).

\noindent$\textbf{Note}.$ Observe that the augmented matrix corresponding to the system of equations \eqref{eq:nonlinear-system} is
\[
A_{\text{aug}} = \begin{pmatrix}
    1 & -1 & 5 &1\\
    1 & 2 & -3&0
\end{pmatrix}.
\]
However, the matrix $A_{\text{aug}}^{\text{lin}}$ fails to satisfy the first condition of Theorem \ref{N1}. Consequently, the system is not partition regular, in the sense that there exists no constant monochromaticromatic solution.

The following theorem provides a necessary condition for the partition regularity of the nonlinear Rado system.
Before stating our main result, define \( c_{t_0} = \vec{0} \), and for each \( 1 \leq j \leq n-1 \), let \( c_{t_j} = c_{l+j+1} \).

\begin{thm}\label{N2}
Let \( P = \{P_1, \ldots, P_m\} \subset \mathbb{P} \) be a finite set of polynomials, and let \( A = (a_{ij}) \) be an \( m \times l \) matrix with rational entries. Suppose that for every finite coloring of \( \mathbb{N} \), there exists a nontrivial monochromatic set (no constant solution) \( \vec{X} = \{x_1, \ldots, x_l\} \cup \{z\} \subset \mathbb{N} \) such that the equation
\[
A\vec{X} + P(z) = \vec{0}
\]
is satisfied. Then there exists \( s \in \mathbb{N} \) and a partition $[1,l+1]=\cup_{j=1}^{s}I_j$
 such that the following conditions hold:


\begin{enumerate}
\item Assume that the column \( c_{l+1} \) is not the zero vector. Then there exists a subset \( I_1 \subseteq \{1, 2, \ldots, l+1\} \) such that
\[
\sum_{i \in I_1} c_i = \vec{0},
\]

However if we assume that $c_{n+1}=\vec 0,$ then choose $n+i=\min\{n+j:c_{n+j}\neq \vec 0\}.$ Then our technique says that 

\begin{itemize}
    \item[(1')] there exists $I_1\subseteq \{1, 2, \ldots, l\}$ such that \(\vec 0\in \operatorname{span}_{\mathbb{Q}}\left(\left\lbrace c_p:p\in I_1\cup \{{n+i}\}\right\rbrace \right)\).
\end{itemize}

    \item For each \( 1 \leq j \leq s \), there exists \( 0 \leq k \leq n-1 \) such that
    \[
  \sum_{i\in I_j}c_i   \in \operatorname{span}_{\mathbb{Q}} \left( \{ c_p : p \in \cup_{k=1}^{j-1}I_k \} \cup \{ c_{t_i} : 0 \leq i \leq k \} \right).
    \]

    \item There exists \( 0 \leq q \leq s \) such that
    \[
    c_{l+2} \in \operatorname{span}_{\mathbb{Q}} \{ c_i : i \in \cup_{k=1}^{q}I_k \}.
    \]

    \item For each \( 3 \leq u \leq n \), there exists \( 0 \leq v \leq s \) such that
    \[
    c_{l+u} \in \operatorname{span}_{\mathbb{Q}} \left( \{ c_i : i\in \cup_{k=1}^{v}I_k \} \cup \{ c_{l+2}, \ldots, c_{l+u-1} \} \right).
    \]
\end{enumerate}

Note that the conditions $(3),(4)$ together reduces to the following condition.
\begin{enumerate}
    \item[(4')] For each \( 2 \leq u \leq n \), there exists \( 0 \leq v' \leq s \) such that
    \[
    c_{l+u} \in \operatorname{span}_{\mathbb{Q}} \left( \{ c_i : i\in \cup_{k=1}^{v'}I_k \}  \right).
    \]
\end{enumerate}

Moreover conditions $2,4'$, together implies that the linear part \( A_{\text{aug}}^{\text{lin}}(P) \) satisfies the condition:

   \begin{enumerate}
       \item[(2')] for each $2\leq j\leq s,$ \[
  \sum_{i\in I_j}c_i   \in \operatorname{span}_{\mathbb{Q}} \left( \left\lbrace c_p : p \in \cup_{k=1}^{s}I_k\setminus I_j \right\rbrace \right).
    \]
     \end{enumerate}
\end{thm}

\begin{rem}\text{}
    
\begin{itemize}
    \item The above condition (4') implies that each column of the augmented matrix \( A_{\text{aug}}^{\text{poly}}(P) \) is a linear combination (over \( \mathbb{Q} \)) of certain columns of the linear part \( A_{\text{aug}}^{\text{lin}}(P) \).

    
    \item Note that if each \( P_i \) is chosen to be the zero polynomial, then the theorem immediately reduces to the classical column condition of Rado.
    
    \item  Furthermore, if each \( P_i \) is a linear polynomial with zero constant term, i.e., \( P_i(z) = c_i z \) for some constants \( c_i \), then the system reduces to the linear form \( A_{\text{aug}}^{\text{lin}}(P)\vec{X} = \vec{0} \), and the result asserts that \( A_{\text{aug}}^{\text{lin}}(P) \) satisfies the column condition.

\end{itemize}
\end{rem}

\begin{note}
    By multiplying the entire system \( A\vec{X} + P(z) = \vec{0} \) by a sufficiently large integer, one can ensure that all entries of the matrix \( A \) and all coefficients of the polynomials \( P_i(z) \) become integers. Therefore, it suffices to prove the above theorem under the assumption that both the matrix \( A \) and the coefficients of \( P(z) \) have integer entries.
\end{note}

Following the proof of Theorem~\ref{N2}, we derive a necessary condition that must be satisfied for a system of nonlinear inhomogeneous equations to be partition regular.


\begin{note}
In \cite{update}, Bergelson posed the question of whether the equation \( x - 2y = P(z) \), where \( P \in \mathbb{P} \) is non-linear, is partition regular. Utilizing \cite[Example~3.15]{NB}, Di Nasso and Luperi Baglini proved that this equation is not partition regular. 

However, our combinatorial method shows this conclusion with four exceptional cases. According to Theorem~\ref{N2}, if \( n > 1 \), the partition regularity of the equation
\[
\sum_{j=1}^{m} a_j x_j + \sum_{i=1}^{n} a_{m+i} z^i = 0
\]
requires the existence of a nonempty subset \( J \subseteq \{1, 2, \dots, n+1\} \) such that \( \sum_{j \in J} c_j = 0 \). 

In particular, for the case \( x - 2y + P(z) = 0 \), this implies that the equation is not partition regular unless \( a_{m+1} \notin \{-1,0, 1, 2\} \). 

\end{note}
\begin{note}
In a recent article \cite{G25}, the author established that the equation \( z - x - x y = 0 \) is partition regular. This example highlights that in multivariable settings, the presence of nonlinear terms does not necessarily exclude linear contributions from playing a significant role. Thus, the linear case must be handled with care, even in nonlinear contexts.

Building upon the techniques developed in the proof of Theorem~\ref{N2}, we derive the following necessary condition for a broader class of nonlinear Diophantine equations.

\begin{thm}\label{justadd}
Let \( m\in \mathbb{N} \), and let
\[
P(y_1, \ldots, y_m) = \sum_{i=1}^{m} c_{i} y_i + Q(y_1, \ldots, y_m)
\]
be a polynomial in \( \mathbb{Z}[y_1, \ldots, y_m] \), where all $c_i's$ are not zero simultaneously, and polynomial \( Q \) has the following structure:
\begin{itemize}
    \item Let \( I \subseteq \{1, 2, \ldots, m\} \) denote the support of the coefficients \( c_1, c_2, \ldots, c_m \); that is,
    \[
    I = \{ i \in \{1, \ldots, m\} \mid c_i \neq 0 \};
    \]
    
    \item The polynomial \( Q \) consists only of monomials of the form \( y_1^{i_1} \cdots y_m^{i_m} \), where \( i_j > 0 \) for some \( j \in I \).
\end{itemize}

 If the equation
\[
 P(y_1, \ldots, y_n) = 0
\]
is partition regular, then there exists a nonempty subset \( I \subseteq \{1, 2, \ldots, n\} \) such that
\[
\sum_{i \in I} c_i = 0.
\]
\end{thm}
\begin{proof}
    Same as Theorem~\ref{N2}.
\end{proof}
\end{note}

So our above Theorem suggest that the following equation is not partition regular.
\begin{thm}
Let \( c \in \mathbb{Z} \setminus \{0, -1\} \), and let \( P(y) \in \mathbb{Z}[y] \) be a polynomial with no constant terms. Then for any \(n\in \N,\) the Diophantine equation
\[
z + c x +x^n P(y) = 0
\]
is not partition regular over \( \mathbb{N} \).
\end{thm}

\begin{rem}
Although from \cite{anal, G25} it is known that systems such as 
\[
\{x,\, xy,\, x + y\} \quad \text{and} \quad \{x,\, y,\, xy,\, xy + y\}
\]
admit monochromatic solutions under any finite coloring of \( \mathbb{N} \), our previous theorem shows that this behavior does not extend universally to all nonlinear combinations. In particular, the set 
\[
\{x,\, y,\, xy - y\}
\] 
is \emph{not} partition regular.
\end{rem}


In \cite{BLM}, Barrett, Lupini, and Moreira introduced the notion of the \emph{maximal Rado condition} and established a necessary condition for a single nonlinear equation. All the previous works rely on tools from nonstandard analysis. In contrast, our result provides a necessary condition for an entire system of nonlinear equations using purely combinatorial arguments.

Our proofs of sufficient conditions rely on the machinery of ultrafilters. Before proceeding to the main results, we briefly recall some fundamental concepts related to ultrafilters and their connection to Ramsey theory. Recall some fundamental concepts related to ultrafilters and their connection to Ramsey theory.

\section{Preliminaries of Ultrafilters}

 Ultrafilters are set-theoretic objects that play a central role in modern Ramsey theory. In this section, we give a brief overview of the basic notions. For a comprehensive treatment, we refer the reader to the book by Hindman and Strauss~\cite{book}.

A \emph{filter} \( \mathcal{F} \) on a nonempty set \( X \) is a collection of subsets of \( X \) satisfying the following properties:
\begin{enumerate}
    \item \( \emptyset \notin \mathcal{F} \) and \( X \in \mathcal{F} \),
    \item If \( A \in \mathcal{F} \) and \( A \subseteq B \subseteq X \), then \( B \in \mathcal{F} \),
    \item If \( A, B \in \mathcal{F} \), then \( A \cap B \in \mathcal{F} \).
\end{enumerate}

Using Zorn's Lemma, one can show that every filter is contained in a maximal filter, called an \emph{ultrafilter}. Any ultrafilter \( p \) on \( X \) satisfies the following partition property:
\begin{itemize}
    \item For every finite partition \( X = A_1 \cup \cdots \cup A_r \), there exists \( i \in \{1, 2, \ldots, r\} \) such that \( A_i \in p \).
\end{itemize}

Let \( S \) be a discrete semigroup. The elements of \( \beta S \), the Stone-\v{C}ech compactification of \( S \), are regarded as ultrafilters on \( S \). For any subset \( A \subseteq S \), we define the set \( \overline{A} = \{ p \in \beta S : A \in p \} \). The collection \( \{\overline{A} : A \subseteq S \} \) forms a basis for the closed sets in \( \beta S \). The semigroup operation \( \cdot \) on \( S \) extends to the Stone-\v{C}ech compactification \( \beta S \), making \( (\beta S, \cdot) \) a compact right topological semigroup. This means that for each \( p \in \beta S \), the function \( \rho_p(q) : \beta S \to \beta S \), defined by \( \rho_p(q) = q \cdot p \), is continuous, and \( S \) is contained within the topological center of \( \beta S \). Specifically, for any \( x \in S \), the function \( \lambda_x : \beta S \to \beta S \), defined by \( \lambda_x(q) = x \cdot q \), is also continuous.

This structure is connected to a famous result by Ellis, which states that if \( S \) is a compact right topological semigroup, then the set of idempotents \( E(S) \neq \emptyset \). In semigroup theory, a nonempty subset \( I \) of a semigroup \( T \) is called a \textbf{left ideal} of \( S \) if \( T I \subset I \), a \textbf{right ideal} if \( I T \subset I \), and a \textbf{two-sided ideal} (or simply an \textbf{ideal}) if it is both a left and right ideal. A \textbf{minimal left ideal} is a left ideal that does not contain any proper left ideal, and similarly, we can define a minimal right ideal and the smallest ideal.

For any compact Hausdorff right topological semigroup \( T \), the smallest two-sided ideal, denoted \( K(T) \), is given by:

\[
K(T) = \bigcup \{ L : L \text{ is a minimal left ideal of } T \} = \bigcup \{ R : R \text{ is a minimal right ideal of } T \}.
\]

Given a minimal left ideal \( L \) and a minimal right ideal \( R \), their intersection \( L \cap R \) is a group, and in particular, it contains an idempotent. If \( p \) and \( q \) are idempotents in \( T \), we write \( p \leq q \) if and only if \( pq = qp = p \). An idempotent is minimal with respect to this relation if and only if it is a member of the smallest ideal \( K(T) \).

For \( p, q \in \beta S \) and \( A \subseteq S \), it holds that \( A \in p \cdot q \) if and only if the set \( \{ x \in S : x^{-1}A \in q \} \in p \), where \( x^{-1}A = \{ y \in S : x \cdot y \in A \} \).

 \begin{center}
$\bullet$    A set $A\subseteq S$ is said to be a \emph{piecewise syndetic} if there exists $p\in K(\beta S,\cdot)$ such that $A\in p.$
\end{center}

Ellis' Theorem asserts that every compact right topological semigroup contains an idempotent. If \( T \) is a compact right topological semigroup, we denote by \( E(T) \) the set of all idempotents of \( T \), i.e., 
\[
E(T) := \{ e \in T : e \cdot e = e \}.
\]

The existence of idempotents in \( \beta \mathbb{N} \) is intimately related to \textbf{Hindman's Theorem} \cite{h}. For any nonempty set \( X \), let \( \mathcal{P}_f(X) \) denote the collection of all nonempty finite subsets of \( X \). For any commutative semigroup \( (S, +) \), a set \( A \subseteq S \) is said to be an \emph{IP set} if there exists a sequence \( \langle x_n \rangle_n \) in \( S \) such that
\[
A = \operatorname{FS}(\langle x_n \rangle_n) = \left\{ \sum_{t \in H} x_t : H \in \mathcal{P}_f(\mathbb{N}) \right\}.
\]
Similarly, for any \( r \in \mathbb{N} \), a set \( A \subseteq S \) is called an \emph{IP\(_r\)} set if there exists a sequence \( \langle x_n \rangle_{n=1}^r \) such that
\[
A = \operatorname{FS}(\langle x_n \rangle_{n=1}^r) = \left\{ \sum_{t \in H} x_t : \emptyset \neq H \subseteq \{1, 2, \ldots, r\} \right\}.
\]

From the ultrafilter proof of Hindman's Theorem\footnote{This is an unpublished result due to Galvin and Glazer}, it follows that if \( A \in p \) for some idempotent ultrafilter \( p \), then \( A \subseteq S \) is an IP set. Moreover, this condition is also \emph{sufficient}; that is, \( A \) is an IP set if and only if \( A \in p \) for some idempotent \( p \in \beta S \).

\begin{defn}
     A set $A\subseteq S$ is \emph{central} if there exists $p\in E\left(K(\beta S,\cdot)\right)$ such that $A\in p.$
\end{defn}
A set \( A \subseteq \mathbb{N} \) is called \emph{additively large} (respectively, \emph{multiplicatively large}) if it is large in the semigroup \( (\mathbb{N}, +) \) (respectively, \( (\mathbb{N}, \cdot) \)).

From \cite{2, 30}, it follows that every additively piecewise syndetic set witnesses the conclusion of the polynomial van der Waerden theorem.

It is a standard exercise to verify that the set \( \overline{E(\beta \mathbb{N}, +)} \), the closure of the idempotents in \( (\beta \mathbb{N}, +) \), forms a left ideal in the semigroup \( (\beta \mathbb{N}, \cdot) \). Consequently, the intersection 
\[
\overline{E(\beta \mathbb{N}, +)} \cap E(\beta \mathbb{N}, \cdot)
\]
is nonempty.

Similarly, the closure of the minimal ideal \( \overline{E(K(\beta \mathbb{N}, +))} \) is a left ideal in \( (\beta \mathbb{N}, \cdot) \), which implies that
\[
\overline{E(K(\beta \mathbb{N}, +))} \cap E(K(\beta \mathbb{N}, \cdot))
\]
is also nonempty.

Any ultrafilter \( p \) in this intersection is called a \emph{centrally rich ultrafilter}. If \( p \) is centrally rich and \( A \in p \), then \( A \) is both additively and multiplicatively central.

Before we proceed to our main proof, let us recall one final interesting fact about ultrafilters, which we will use in our proof. A set \( A \subseteq \mathbb{N} \) is called an \emph{\(IP_0\) set} if, for every \( r \in \mathbb{N} \), the set \( A \) contains an \(IP_r\) set. Clearly, every member of an additive idempotent ultrafilter is an \(IP_0\) set.

We denote by \( \overline{\mathrm{IP}_0} \) the set of all ultrafilters in \( \beta \mathbb{N} \) whose every member is an \(IP_0\) set. It is a routine exercise to verify that \( \overline{\mathrm{IP}_0} \) is a two-sided ideal in the semigroup \( (\beta \mathbb{N}, \cdot) \). In particular, this implies that
\[
\overline{K(\beta \mathbb{N}, \cdot)} \subseteq \overline{\mathrm{IP}_0}.
\]

\section{Proof of Sufficient Conditions}

\subsection{Extending the Sum-Equals-Product Theorem}

In this section, we focus on extending the classical Sum-Equals-Product phenomenon in Ramsey theory. We explore both infinitary and polynomial extensions of this idea, showing that even complex combinations of sums and products can exhibit partition regularity. 

\subsubsection{Infinitary Extension of the Sum-Equals-Product Theorem}

We begin by establishing an infinitary version of the Sum-Equals-Product phenomenon. Our goal is to construct sequences whose finite sums and finite products, as well as certain structured combinations of the two, lie entirely within a single cell of any finite coloring of the natural numbers. This result significantly strengthens earlier partition regularity results and lays the foundation for subsequent polynomial generalizations.

\begin{thm}\label{task2}
    Let \( r \in \mathbb{N} \), and suppose that \( \mathbb{N} = \bigcup_{i=1}^r A_i \) is a finite partition. Then there exist two sequences \( \langle a_n \rangle_n \) and \( \langle b_n \rangle_n \) in \( \mathbb{N} \), and an index \( i \in \{1,2,\ldots, r\} \), such that:
    \begin{enumerate}
        \item \( FS\left(\langle a_n \rangle_n\right) \cup FP\left(\langle b_n \rangle_n\right) \subseteq A_i \), and 
        \item For every \( N \in \mathbb{N} \), we have
        \[
        \bigcup_{m=1}^N FS\left(\langle a_n \rangle_{n=1}^m\right) \cdot FP\left(\langle b_n \rangle_{n=m}^N\right) \subseteq A_i.
        \]
    \end{enumerate}
\end{thm}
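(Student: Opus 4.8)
The plan is to extract all three conclusions from a single ultrafilter that is simultaneously additively and multiplicatively large. As noted above, $\overline{E(\beta\mathbb{N},+)}\cap E(\beta\mathbb{N},\cdot)\neq\emptyset$; fix an ultrafilter $q$ in this intersection. Then $q$ is a nonprincipal $\cdot$-idempotent, and since $q\in\overline{E(\beta\mathbb{N},+)}$ every member of $q$ is an IP set: if $A\in q$, the clopen neighbourhood $\overline{A}$ of $q$ meets $E(\beta\mathbb{N},+)$, and any member of an additive idempotent is an IP set. Fix $i$ with $A_i\in q$, put $A=A_i$, and set $A^{\star}=\{x\in A:x^{-1}A\in q\}$. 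Recall the standard facts that $A^{\star}\in q$ and that $x\in A^{\star}$ implies $x^{-1}(A^{\star})\in q$.

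Since $A^{\star}\setminus\{1\}\in q$ (as $q$ is nonprincipal), it is an IP set, so fix \emph{once and for all} a sequence $\langle a_n\rangle_n$ with $\FS(\langle a_n\rangle_n)\subseteq A^{\star}\setminus\{1\}$; in particular $a_n\ge 2$ for every $n$. This already yields the $\FS$-part of~(1), and, crucially, every finite sum $s$ of the $a_n$ lies in $A^{\star}$, so $s^{-1}(A^{\star})\in q$.

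Now construct $\langle b_n\rangle_n$ by recursion. For $n\ge 0$, let $V_n$ be the finite set of all products $s\cdot w$, where $s$ is either $1$ or a nonempty finite sum of terms from $\{a_1,\dots,a_{n+1}\}$, $w$ is either $1$ or a nonempty finite product of terms from $\{b_1,\dots,b_n\}$, and the largest index used in $s$ is at most the smallest index used in $w$ (the index condition being vacuous if $s=1$ or $w=1$). These are precisely the partial values that the as-yet-unchosen $b_{n+1},b_{n+2},\dots$ may still be multiplied into while forming the patterns in~(1) and~(2). We maintain the invariant $V_n\setminus\{1\}\subseteq A^{\star}$, which holds at $n=0$ since $V_0\setminus\{1\}$ consists of nonempty sums from $\{a_1\}$, all in $A^{\star}$. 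Granting it for $n$, put
\[
C_n=A^{\star}\cap\bigcap_{z\in V_n\setminus\{1\}}z^{-1}(A^{\star}),
\]
which lies in $q$, being a finite intersection of members of $q$ (by the invariant and the star fact); choose $b_{n+1}\in C_n\setminus\{1\}$, possible since $q$ is nonprincipal. Checking the possible forms of an element of $V_{n+1}\setminus\{1\}$ — either a sum, already in $\FS(\langle a_n\rangle_n)\subseteq A^{\star}$; or $b_{n+1}$ itself, in $C_n\subseteq A^{\star}$; or a product $z\cdot b_{n+1}$ with $z\in V_n\setminus\{1\}$, which lies in $A^{\star}$ because $b_{n+1}\in C_n\subseteq z^{-1}(A^{\star})$ — shows $V_{n+1}\setminus\{1\}\subseteq A^{\star}$, so the invariant persists. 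Finally, every $\sum_{t\in H}a_t$, every $\prod_{t\in H}b_t$, and every $\big(\sum_{t\in H_1}a_t\big)\big(\prod_{t\in H_2}b_t\big)$ with $\max H_1\le\min H_2$ lies in some $V_n\setminus\{1\}\subseteq A^{\star}\subseteq A_i$; since every term occurring in~(1) or in the union defining~(2) has one of these three forms, both conclusions follow.

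The real work is the bookkeeping in the previous paragraph: one must see that it is legitimate to fix all of the $a_n$ before selecting any $b_n$ and then run a purely multiplicative recursion. This works precisely because placing each partial sum $s$ inside $A^{\star}$ — not merely inside $A$ — is what forces $s^{-1}A\in q$ and hence keeps the ``good set'' $C_n$ inside $q$ at every step; and the index condition in the definition of $V_n$ must be tuned to the interval shape $\FS(\langle a_n\rangle_{n=1}^{m})\cdot\FP(\langle b_n\rangle_{n=m}^{N})$ of~(2), the two ranges overlapping only at the single index $m$, which is why an empty factor (equal to $1$) on either side is left unconstrained. Everything else used — existence of $q$, the fact that members of $q$ are IP sets, and the multiplicative star facts — comes from the preliminaries.
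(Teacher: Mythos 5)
Your proof is correct and follows essentially the same route as the paper: both arguments fix an ultrafilter in \( \overline{E(\beta\mathbb{N},+)} \cap E(\beta\mathbb{N},\cdot) \), pass to the multiplicative star set \( A^\star \), place all finite sums of the \( a_n \) inside \( A^\star \) so that the star lemma keeps the mixed products \( s\cdot w \) under control, and close off the products by a recursion on the \( b_n \). The only difference is organizational: the paper interleaves the two recursions via an auxiliary additive idempotent and the set \( A^{\star\star} \), whereas you extract the entire \( a \)-sequence in one stroke from the fact that \( A^\star \) is an IP set and then run a purely multiplicative recursion --- a legitimate streamlining, since the choice of the \( a_n \) never depends on the \( b_n \).
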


\begin{itemize}
    \item Since \( (a_1 + a_2) \cdot b_2 = a_1 \cdot b_2 + a_2 \cdot b_2 \), the theorem above implies that the equation \( a + b = c \cdot d \) is partition regular. That is, no matter how the natural numbers are finitely colored, there exists a monochromatic solution to the equation \( a + b = c \cdot d \).
    
    \item Also, observe that
    \[
    (a_1 + a_2 + \cdots + a_n) \cdot b_{n+1} b_{n+2} \cdots b_{n+(m-1)} = \sum_{i=1}^n a_i \cdot b_{n+1} b_{n+2} \cdots b_{n+(m-1)}.
    \]
    This shows that, for any \( m,n \in \mathbb{N} \), the equation
    \[
    x_1 + x_2 + \cdots + x_n = y_1 \cdot y_2 \cdots y_m
    \]
    is partition regular, as proved in \cite{h2}.
\end{itemize}

\begin{proof}[Proof of Theorem \ref{task2}]
Choose any 
\[
p \in \overline{E\left(\beta \mathbb{N},+\right)} \cap E\left(\beta \mathbb{N},\cdot\right),
\]
and let \( A \in p = p \cdot p \). Define:
\[
A^\star = \{x \in A : x^{-1}A \in p\} \in p.
\]
Then \( A^\star \) is an additive \( \text{IP} \)-set. Hence, there exists \( q \in E(\beta \mathbb{N}, +) \) such that \( A^\star \in q \).

Now define:
\[
A^{\star \star} = \{x \in A^\star : -x + A^\star \in q \} \in q.
\]
Then, by \cite[Lemma 4.14]{book}, for all \( x \in A^{\star \star} \), we have:
\[
-x + A^{\star \star} \subseteq A^{\star \star}.
\]

Choose \( a_1 \in A^{\star \star} \). Since \( A^{\star \star} \subseteq A^\star \), we also have \( a_1 \in A^\star \). 
Then the intersection \( A^\star \cap a_1^{-1}A^\star \in p \), because both sets belong to \( p \), and \( p \) is closed under finite intersections.

Now choose:
\[
b_1 \in A^\star \cap a_1^{-1}A^\star.
\]
Then:
\[
a_1 \in A^\star, \quad b_1 \in A^\star, \quad \text{and} \quad a_1 \cdot b_1 \in A^\star.
\]
So,
\[
\{a_1, b_1, a_1 \cdot b_1\} \subseteq A^\star \subseteq A.
\]

Now, since \( a_1 \in A^{\star \star} \), and by Lemma 4.14 from \cite{book}, we have:
\[
A^{\star \star} \cap (-a_1 + A^{\star \star}) \in q.
\]
So we choose:
\[
a_2 \in A^{\star \star} \cap (-a_1 + A^{\star \star}).
\]
This implies:
\[
\{a_1, a_2, a_1 + a_2\} \subseteq A^{\star \star} \subseteq A^\star.
\]

Now define:
\[
b_2 \in A^\star \cap \bigcap_{x \in \{a_1, a_2, a_1 + a_2\}} x^{-1}A^\star \cap \bigcap_{y \in \{a_1, b_1, a_1 \cdot b_1\}} y^{-1}A^\star.
\]
Then:
\[
\{b_2, a_1 \cdot b_2, a_2 \cdot b_2, (a_1 + a_2) \cdot b_2, a_1 \cdot b_1 \cdot b_2\} \subseteq A^\star \subseteq A.
\]

Hence, we have the following inclusions:

\begin{enumerate}
    \item \( \{a_1, a_2, a_1 + a_2\} \subset A^{\star \star} \subseteq A^\star, \)
    \item \( \{b_2\} \cup b_2 \cdot \{a_1, a_2, a_1 + a_2\} \subseteq A^\star, \)
    \item \( \{a_1, b_1, a_1 \cdot b_1\} \subset A^\star, \)
    \item \( b_2 \cdot \{a_1, b_1, a_1 \cdot b_1\} \subset A^\star. \)
\end{enumerate}

Inductively, we assume that for some \( N \in \mathbb{N} \), we have two sequences \( \langle a_n \rangle_{n=1}^N \) and \( \langle b_n \rangle_{n=1}^N \) such that:
\begin{enumerate}
    \item \( FS\left(\langle a_n \rangle_{n=1}^N\right) \subset A^{\star\star}, \)
    \item \( FP\left(\langle b_n \rangle_{n=1}^N\right) \subset A^\star, \)
    \item For every \( M \leq N \), we have
    \[
    \bigcup_{m=1}^M FS\left(\langle a_n \rangle_{n=1}^m\right) \cdot FP\left(\langle b_n \rangle_{n=m}^M\right) \subset A^\star.
    \]
\end{enumerate}

To complete the induction, choose
\[
a_{N+1} \in A^{\star\star} \cap \bigcap_{x \in FS\left(\langle a_n \rangle_{n=1}^N\right)} (-x + A^{\star\star}) \in q.
\]
Hence,
\[
FS\left(\langle a_n \rangle_{n=1}^{N+1}\right) \subset A^{\star\star}.
\]

Define
\[
B = \bigcup_{M=1}^N \bigcup_{m=1}^M FS\left(\langle a_n \rangle_{n=1}^m\right) \cdot FP\left(\langle b_n \rangle_{n=m}^M\right) \subset A^\star.
\]

Now choose
\[
b_{N+1} \in A^\star \cap \bigcap_{x \in FS\left(\langle a_n \rangle_{n=1}^N\right)} x^{-1}A^\star \cap \bigcap_{y \in B} y^{-1}A^\star \cap \bigcap_{z \in FP\left(\langle b_n \rangle_{n=1}^N\right)} z^{-1}A^\star \in p.
\]

Now clearly we have:
\begin{enumerate}
    \item \( FS\left(\langle a_n \rangle_{n=1}^{N+1} \right) \subset A^{\star\star} \),
    \item \( FP\left(\langle b_n \rangle_{n=1}^{N+1} \right) \subset A^\star \), and
    \item \( FS\left(\langle a_n \rangle_{n=1}^{N+1} \right) \cdot b_{N+1} \cup B \cdot b_{N+1} \subseteq A^\star \).
\end{enumerate}

Now, from the definition of \( B \), it is clear that this verifies the third inductive hypothesis for \( N+1 \) as well.

This completes the proof.
\end{proof}

However the technique of the proof of the above theorem can be generalized further to prove a more general result. Before we describe that let us introduce two notions of regular structure.
\begin{defn}\text{}\label{defn}
    \begin{enumerate}
    \item \textbf{(Additive regular):} A family of finite subsets $\F$ of $\N$ is called \textit{additive regular} if every additive central set contains an element $F\in \F.$

    \item \textbf{(Multiplicative regular):} A family of finite subsets $\G$ of $\N$ is called \textit{multiplicative regular} if every multiplicative central set contains an element $G\in \G.$
\end{enumerate}
\end{defn}

If we closely look at the proof of  Theorem \ref{task2}, we see that one can replace each $a_n$ by an element of a given additive regular family, and each $b_n$ by an element of a given multiplicative regular family. 

For any two finite sets $F,G$, define 
\[
F \cdot G = \{ f \cdot g : f \in F,\, g \in G \}.
\]

Given a sequence of finite sets $\langle F_n \rangle_{n=1}^N$, define:
\begin{itemize}
    \item $FS\left( \langle F_n \rangle_{n=1}^N \right) = \bigcup_{f_i \in F_i \text{ for each } i \in \{1, \ldots, N\}} FS\left( (f_i)_{i=1}^N \right),$
    \item $FP\left( \langle F_n \rangle_{n=1}^N \right) = \bigcup_{f_i \in F_i \text{ for each } i \in \{1, \ldots, N\}} FP\left( (f_i)_{i=1}^N \right).$
\end{itemize}

The proof of the following theorem is similar to the proof of Theorem \ref{task2}, and hence we omit it.

\begin{thm}\label{notask}
Let $\langle \mathcal{F}_n \rangle_n$ and $\langle \mathcal{G}_n \rangle_n$ be sequences of additive and multiplicative regular families, respectively.

Then for any \( r \in \mathbb{N} \) and any finite partition \( \mathbb{N} = \bigcup_{i=1}^r A_i \), there exist sequences \( \langle F_n \rangle_n \) and \( \langle G_n \rangle_n \) such that:
\begin{enumerate}
    \item For every \( n \in \mathbb{N} \), we have \( F_n \in \mathcal{F}_n \) and \( G_n \in \mathcal{G}_n \);
    \item \( FS\left( \langle F_n \rangle_n \right) \cup FP\left( \langle G_n \rangle_n \right) \subseteq A_i \) for some \( i \in \{1,2,\ldots,r\} \);
    \item For every \( N \in \mathbb{N} \),
    \[
    \bigcup_{m=1}^N FS\left( \langle F_n \rangle_{n=1}^m \right) \cdot FP\left( \langle G_n \rangle_{n=m}^N \right) \subseteq A_i.
    \]
\end{enumerate}
\end{thm}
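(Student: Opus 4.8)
The plan is to mirror the proof of Theorem~\ref{task2} almost verbatim, replacing each single element choice by a choice of a finite set from the appropriate regular family. First I would fix a centrally rich ultrafilter
\[
p \in \overline{E\left(K(\beta\mathbb{N},+)\right)} \cap E\left(K(\beta\mathbb{N},\cdot)\right),
\]
so that $p$ is an idempotent in $(\beta\mathbb{N},\cdot)$ and every member of $p$ is additively central; pick $A \in p$ with the witnessed cell from the partition, set $A^\star = \{x \in A : x^{-1}A \in p\} \in p$. Since $A^\star$ is additively central, choose $q \in E(K(\beta\mathbb{N},+))$ with $A^\star \in q$, and set $A^{\star\star} = \{x \in A^\star : -x + A^\star \in q\} \in q$, so that $-x + A^{\star\star} \subseteq A^{\star\star}$ for all $x \in A^{\star\star}$ by \cite[Lemma 4.14]{book}. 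The point is that $A^{\star\star}$ is still additively central (it lies in $q \in E(K(\beta\mathbb{N},+))$) and $A^\star$ is still multiplicatively central (it lies in $p \in E(K(\beta\mathbb{N},\cdot))$), so both sets are legitimate targets for drawing members of $\mathcal{F}_n$ and $\mathcal{G}_n$ respectively.

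The induction runs exactly as before. Having constructed $F_1, \ldots, F_N \in \mathcal{F}_1 \times \cdots \times \mathcal{F}_N$ and $G_1, \ldots, G_N \in \mathcal{G}_1 \times \cdots \times \mathcal{G}_N$ with
\begin{enumerate}
    \item $FS\left(\langle F_n \rangle_{n=1}^N\right) \subseteq A^{\star\star}$,
    \item $FP\left(\langle G_n \rangle_{n=1}^N\right) \subseteq A^\star$,
    \item $\bigcup_{M=1}^N \bigcup_{m=1}^M FS\left(\langle F_n \rangle_{n=1}^m\right) \cdot FP\left(\langle G_n \rangle_{n=m}^M\right) \subseteq A^\star$,
\end{enumerate}
I would first note that $A^{\star\star} \cap \bigcap_{x \in FS(\langle F_n \rangle_{n=1}^N)} (-x + A^{\star\star})$ is a finite intersection of members of $q$, hence in $q$, hence additively central; so $\mathcal{F}_{N+1}$ contains a set $F_{N+1}$ inside it, which gives $FS\left(\langle F_n \rangle_{n=1}^{N+1}\right) \subseteq A^{\star\star}$. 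Then, writing $B$ for the union in hypothesis (3), the set
\[
A^\star \cap \bigcap_{x \in FS(\langle F_n \rangle_{n=1}^{N+1})} x^{-1}A^\star \cap \bigcap_{y \in B} y^{-1}A^\star \cap \bigcap_{z \in FP(\langle G_n \rangle_{n=1}^N)} z^{-1}A^\star
\]
is a finite intersection of members of $p$, hence in $p$, hence multiplicatively central; so $\mathcal{G}_{N+1}$ contains a set $G_{N+1}$ inside it. Multiplying through, every element of $FS\left(\langle F_n \rangle_{n=1}^{N+1}\right)$ times every element of $FP\left(\langle G_n \rangle_{n=m}^{N+1}\right)$ lands in $A^\star$ (using that $FP(\langle G_n\rangle_{n=m}^{N+1}) = FP(\langle G_n\rangle_{n=m}^{N}) \cdot G_{N+1} \cup G_{N+1}$ and the corresponding factor was already placed in some $x^{-1}A^\star$ or $y^{-1}A^\star$), which verifies hypotheses (1)--(3) for $N+1$. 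Taking the full sequences and $A_i = A$ completes the proof.

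The only genuine point requiring care — and the one I expect to be the main obstacle in writing it cleanly — is the bookkeeping in step (3): one must check that every product appearing in $\bigcup_{m=1}^{N+1} FS(\langle F_n\rangle_{n=1}^m)\cdot FP(\langle G_n\rangle_{n=m}^{N+1})$ is accounted for by one of the four intersection clauses defining the admissible set for $G_{N+1}$. This is the same verification as in Theorem~\ref{task2}, just with finite sets in place of single elements, and it works because $FS$ and $FP$ of finite-set sequences distribute over the product in the same way: $(f_{i_1} + \cdots + f_{i_k}) \cdot g = f_{i_1}\cdot g + \cdots + f_{i_k}\cdot g$ for any choice of representatives. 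Since the structural argument is identical to that of Theorem~\ref{task2} and only the "atoms" change from numbers to sets drawn from regular families, I would present this as a remark-level adaptation rather than repeating the full induction, exactly as the excerpt proposes.
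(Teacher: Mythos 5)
Your proposal is correct and follows exactly the route the paper intends: the paper omits the proof, stating only that one repeats the argument of Theorem~\ref{task2} with each $a_n$ replaced by a member of $\mathcal{F}_n$ and each $b_n$ by a member of $\mathcal{G}_n$, which is precisely what you do. You also correctly supply the one detail that genuinely must change --- upgrading the ultrafilter from $\overline{E(\beta\mathbb{N},+)} \cap E(\beta\mathbb{N},\cdot)$ to a centrally rich one in $\overline{E(K(\beta\mathbb{N},+))} \cap E(K(\beta\mathbb{N},\cdot))$, so that the sets from which $F_{N+1}$ and $G_{N+1}$ are drawn are additively (resp.\ multiplicatively) \emph{central} rather than merely IP, as the definition of regular families requires.
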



\begin{cor}
    For any \( l, m, n \in \mathbb{N} \) with \( n > 2 \), \( m > 1 \), the system of nonlinear equations
    \begin{align*}
        x_1 + x_2 + x_3 + \cdots + x_n &= z_1 \cdot y_1 y_2 \cdots y_m, \\
        x_1 + 2 x_2 + x_3 + \cdots + x_n &= z_2 \cdot y_1 y_2 \cdots y_m, \\
        &\vdots \\
        x_1 + l x_2 + x_3 + \cdots + x_n &= z_l \cdot y_1 y_2 \cdots y_m,
    \end{align*}
    is partition regular.
\end{cor}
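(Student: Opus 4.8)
The plan is to derive the corollary from Theorem~\ref{notask}, choosing the regular families so that the ``arithmetic progression part'' of the left-hand sides is supplied by an additive regular family of Brauer configurations, the multiplicative factors $y_1,\dots,y_m$ are taken from trivial (singleton) multiplicative families, and the mixed products in item~(3) of Theorem~\ref{notask} provide exactly the absorption that glues the two sides together.

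First I would read off what the equations force. Writing $Y=y_1y_2\cdots y_m$, the $k$-th equation is $(x_1+x_3+\cdots+x_n)+kx_2=z_kY$, so subtracting consecutive equations gives $x_2=(z_{k+1}-z_k)Y$. Hence $Y\mid x_2$, say $x_2=dY$; the $z_k$ form an arithmetic progression of common difference $d$; and $x_1+x_3+\cdots+x_n=(z_1-d)Y$. So it suffices to produce, inside a single colour class $A_i$, a number $d$, an arithmetic progression $z_1,\dots,z_l$ of step $d$ whose first term exceeds $d$, and a product $Y$ of $m$ monochromatic numbers such that $dY\in A_i$ and $(z_1-d)Y$ is a sum of $n-1$ monochromatic numbers --- and all of this will be arranged by placing $d$ and all the relevant progression terms inside one and the same Brauer configuration.

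Concretely: fix $L=l+n-1$ and let $\mathcal F_1=\{\{d\}\cup\{a+jd:0\le j\le L\}:a,d\in\N\}$, the family of Brauer configurations of length $L$ together with their common difference; let $\mathcal F_n$ for $n\ge 2$ and every $\mathcal G_n$ be the family of singletons. Applying Theorem~\ref{notask} to the partition $\N=\bigcup_{i=1}^{r}A_i$ yields a colour class $A_i$, a set $F_1=\{d\}\cup\{a+jd:0\le j\le L\}\subseteq A_i$, and singletons $G_n=\{g_n\}$ with $FP(\langle g_n\rangle_n)\subseteq A_i$, such that moreover $F_1\cdot FP(\langle g_n\rangle_{n=1}^{N})\subseteq A_i$ for every $N$ (item~(3) of Theorem~\ref{notask} with the inner index $1$, using $FS(\langle F_n\rangle_{n=1}^{1})=F_1$). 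Now set $Y=g_1g_2\cdots g_m$, $y_j=g_j$, $x_1=(a+d)Y$, $x_2=x_3=\cdots=x_n=dY$, and $z_k=a+(n-1+k)d$ for $k=1,\dots,l$. Since $a+d\in F_1$, $d\in F_1$, and $Y\in FP(\langle g_n\rangle_{n=1}^{N})$ for $N\ge m$, each $x_j$ lies in $F_1\cdot FP(\langle g_n\rangle_{n=1}^{N})\subseteq A_i$; the indices $n-1+k$ run from $n$ up to $n-1+l=L$, so each $z_k$ is a term of the progression in $F_1\subseteq A_i$; and each $y_j=g_j\in A_i$. Finally
\[
x_1+kx_2+x_3+\cdots+x_n=(a+d)Y+k\,dY+(n-2)\,dY=\bigl(a+(n-1+k)d\bigr)Y=z_k\,y_1y_2\cdots y_m,
\]
so all $n+l+m$ variables lie in $A_i$, and a monochromatic solution exists.

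The one substantive input --- and the step I expect to be the main obstacle --- is verifying that the Brauer family $\mathcal F_1$ is additive regular, i.e.\ that every additive central set contains an arithmetic progression of any prescribed length \emph{together with its common difference}. This is the central-set form of Brauer's theorem; it follows from the Central Sets Theorem, and the work here is mainly to cite or record the statement in exactly that form (see \cite{book}). The remaining verifications are routine: positivity of all variables is immediate from $a,d,g_j\ge 1$, and coincidences among the chosen values cause no trouble. (In fact the hypotheses $n>2$ and $m>1$ are not needed for this argument.)
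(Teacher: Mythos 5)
Your proof is correct, and it follows the same overall strategy as the paper --- both derive the corollary from Theorem~\ref{notask} by feeding it an arithmetic-progression-type additive regular family and then reading the resulting mixed products as the two sides of the equations. But your concrete choices differ from the paper's in ways that matter. The paper takes $\mathcal{F}_1=\{\{a,a+d,\dots,a+ld\}\}$ (without the common difference), $\mathcal{F}_2$ the singletons $\{b_1+\cdots+b_{n-2}\}$ and $\mathcal{G}_2$ the singletons $\{c_1\cdots c_{m-1}\}$, and then rewrites $(a+id+b_1+\cdots+b_{n-2})\cdot c_1\cdots c_{m-1}$ as a solution. As written, that sketch leaves several memberships unjustified: $x_2$ must be $d$ times a product, but $d$ itself is not an element of the paper's $\mathcal{F}_1$-sets; the individual summands $b_jC$ and the individual factors $c_j$ are not covered by the monochromaticity guarantees of Theorem~\ref{notask} (only the packaged sum and product are); and the factor count on the right-hand side ($z_i$ together with $m$ variables $y_j$ against a product of only $m$ numbers) does not match. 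Your version repairs all three points: you use the Brauer family $\{d\}\cup\{a+jd:0\le j\le L\}$ so that $d$ is itself monochromatic (and you correctly isolate the one nontrivial input, that this family is additive regular, i.e.\ the central-sets form of Brauer's theorem via the Central Sets Theorem); you take the $y_j$ to be the individual generators $g_j$ of the finite-products system, which item (2) of Theorem~\ref{notask} does colour; and you set $x_2=\cdots=x_n=dY$ and place the $z_k$ directly inside the long progression, so every variable lands in $F_1$, in $FP(\langle G_n\rangle_n)$, or in $F_1\cdot FP(\langle G_n\rangle_{n=1}^N)$, all of which the theorem controls. The algebraic identity $(a+d)Y+kdY+(n-2)dY=(a+(n-1+k)d)Y$ checks out, so your argument is a complete and correct proof; your observation that $n>2$, $m>1$ are not needed is also accurate.
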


\begin{proof}
    Define the families:
    \begin{itemize}
        \item \( \mathcal{F}_1 = \{ \{a, a + d, a + 2d, \dots, a + ld\} : a, d \in \mathbb{N} \} \),
        \item \( \mathcal{F}_2 = \left\{ \left\{ b_1 + \cdots + b_{n-2} \right\} : b_i \in \mathbb{N} \text{ for all } i \right\} \),
        \item \( \mathcal{G}_2 = \left\{ \left\{ c_1 \cdots c_{m-1} \right\} : c_i \in \mathbb{N} \right\} \).
    \end{itemize}

    By Theorem \ref{notask}, there exist sets \( F_1 \in \mathcal{F}_1 \), \( F_2 \in \mathcal{F}_2 \), and \( G_2 \in \mathcal{G}_2 \) such that \( (F_1 + F_2) \cdot G_2 \) is monochromatic.

    Then, for each \( i \in \{1, \dots, l\} \), the quantity
    \[
    (a + i d + b_1 + \cdots + b_{n-2}) \cdot c_1 \cdots c_{m-1}
    \]
    belongs to \( (F_1 + F_2) \cdot G_2 \), and can be rewritten as
    \[
    x_1 + i x_2 + x_3 + \cdots + x_n = y_1 \cdots y_m.
    \]
    This gives a monochromatic solution for each row in the system. Hence, the system is partition regular.
\end{proof}

In the above proof, we may choose \( \mathcal{G}_2 \) to be the set of geometric progressions of length \( m \) to obtain the following corollary. The proof is similar and thus omitted.

\begin{cor}
    For any \( l, m, n \in \mathbb{N} \) with \( n > 2 \) and \( m > 1 \), the system of nonlinear equations
    \begin{align*}
        x_1 + x_2 + x_3 + \cdots + x_n &= y_1 \cdot z^m, \\
        x_1 + 2 x_2 + x_3 + \cdots + x_n &= y_2 \cdot z^m, \\
        &\vdots \\
        x_1 + l x_2 + x_3 + \cdots + x_n &= y_l \cdot z^m,
    \end{align*}
    is partition regular.
\end{cor}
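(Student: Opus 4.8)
The plan is to mimic the previous corollary almost verbatim, changing only the multiplicative regular family that feeds into Theorem~\ref{notask}. Recall that a family $\G$ of finite subsets of $\N$ is \emph{multiplicative regular} if every multiplicative central set contains a member of $\G$. The key observation is that the family of geometric progressions of length $m$,
\[
\G_m = \left\{ \{ z, z r, z r^2, \dots, z r^{m-1}\} : z, r \in \N \right\},
\]
is multiplicative regular: applying the (additive) van der Waerden theorem inside the semigroup $(\N,\cdot)$ — equivalently, pushing a multiplicative central set forward through the isomorphism $n \mapsto 2^n$ where it becomes an additive central set, invoking the fact (cited in the excerpt) that additive piecewise syndetic, hence additive central, sets contain arbitrarily long arithmetic progressions, and pulling back — shows every multiplicative central set contains a geometric progression of any prescribed length. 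So $\G_m$ is multiplicative regular, and in particular the singleton family $\{ \{z \cdot r^{m-1}\} : z, r \in \N\}$ obtained by keeping only the last term is also multiplicative regular, since any set containing a geometric progression of length $m$ certainly contains its last term $z r^{m-1}$; but in fact we want the full progression to be available.

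First I would set up the families exactly as in the preceding proof: take $\F_1 = \{ \{a, a+d, \dots, a + ld\} : a,d \in \N\}$ (arithmetic progressions of length $l+1$, which is additive regular by the same van der Waerden argument applied in $(\N,+)$), take $\F_2 = \{ \{b_1 + \cdots + b_{n-2}\} : b_i \in \N\}$ (singletons, trivially additive regular since $n - 2 \geq 1$ and a central set is closed under nothing in particular — but a singleton $\{b\}$ for arbitrary $b$ lies in any nonempty set, and every central set is nonempty, so this family is additive regular), and for the multiplicative side take $\F_n$-analogue $\G_2$ to be the family of geometric progressions of length $m$. Then I would apply Theorem~\ref{notask} with the constant sequences $\langle \F_n\rangle$ and $\langle \G_n\rangle$ that are equal to these families in positions $1$ and $2$ (and anything, say the family of all singletons, elsewhere, since we only use the first two terms).

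Next, Theorem~\ref{notask} yields $F_1 \in \F_1$, $F_2 \in \F_2$, $G_2 \in \G_2$ and a color class $A_i$ with $(F_1 + F_2)\cdot G_2 \subseteq A_i$. Write $F_1 = \{a + jd : 0 \le j \le l\}$, $F_2 = \{b\}$ with $b = b_1 + \cdots + b_{n-2}$, and $G_2 = \{z r^{k} : 0 \le k \le m-1\}$. The element $r^{m-1}$ is itself expressible: set $z' = z$, and note $z r^{m-1}$ has the form $y_i \cdot z^m$ with $y_i = (a + i d + b)$ and the "$z$" of the statement equal to $r$ — wait, more carefully, $G_2$'s largest element is $z r^{m-1}$, and we want a representation $y_i \cdot w^m$; rename $w = r$ and absorb the leftover factor $z$ into $y_i$. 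Concretely, for each $i \in \{0,1,\dots,l\}$ (relabelling to match the $l$ equations), the quantity
\[
(a + i d + b_1 + \cdots + b_{n-2}) \cdot z \cdot r^{\,m-1}
\]
lies in $(F_1 + F_2) \cdot G_2 \subseteq A_i$; multiplying and dividing, this equals $\big((a + i d + b_1 + \cdots + b_{n-2}) \cdot z / r\big) \cdot r^m$ — which is not obviously an integer. To avoid this, I would instead use the full geometric progression: since $G_2$ contains $z r^{m-1}$ and we may as well have chosen $G_2$ with first term $z$ divisible appropriately, the cleanest route is to take $\G_2 = \{ \{ c \cdot w^m : \text{(a single element, with } c, w \in \N)\}\}$, i.e.\ the family whose members are singletons $\{c w^m\}$. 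This family is multiplicative regular because every multiplicative central set, containing a geometric progression $z, zw, \dots, z w^{m}$ of length $m+1$, contains its last element $z w^m = c w^m$ with $c = z$. With this $\G_2$, Theorem~\ref{notask} gives $F_1, F_2$ as before and $G_2 = \{c w^m\}$ with $(F_1 + F_2)\cdot G_2 \subseteq A_i$, so
\[
(a + i d + b_1 + \cdots + b_{n-2}) \cdot c \cdot w^m \in A_i
\]
for every $i$, and setting $x_1 = (a + b_1 + \cdots + b_{n-2}) c \cdot$ — hmm, the products do not distribute over a single monochromatic value unless we are careful; the honest statement is that the tuple $x_1 = (a) $, $x_2 = d$, $x_3 = b_1, \dots$, $y_i = c$, $z = w$ after scaling everything by $c$: replace $(x_1, x_2, x_3, \dots, x_n) \mapsto (c x_1', \dots)$ is not needed because the equation $x_1 + i x_2 + \cdots + x_n = y_i z^m$ with $x_1 = a, x_2 = d, x_3 + \cdots + x_n = b_1 + \cdots + b_{n-2}$, $y_i = c$, $z = w$ holds identically and each side is the monochromatic element.

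I expect the main (really the only) obstacle to be the bookkeeping needed to present each row of the system as an honest solution with natural-number entries whose common value is the monochromatic element — in particular, making sure that the "$z^m$" factor is packaged correctly, which is why choosing $\G_2$ to consist of singletons of the form $\{c w^m\}$ (rather than genuine geometric progressions) is the right move. Everything else is a direct transcription of the previous corollary's proof: verifying multiplicative regularity of $\G_2$ from the multiplicative van der Waerden theorem (via $n \mapsto 2^n$), verifying additive regularity of $\F_1$ and $\F_2$, and then reading off the $l$ monochromatic equations from the single inclusion $(F_1 + F_2) \cdot G_2 \subseteq A_i$.
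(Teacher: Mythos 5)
Your overall strategy---rerun the preceding corollary's argument through Theorem~\ref{notask} with a different multiplicative regular family---is exactly what the paper intends: its (omitted) proof consists of the single instruction to replace $\mathcal{G}_2$ by a family of geometric progressions. But your execution breaks at the final step. The assignment you settle on, $x_1=a$, $x_2=d$, $x_3=b_1,\dots$, $y_i=c$, $z=w$, does not solve the system: row $i$ then reads $a+id+b_1+\cdots+b_{n-2}=cw^m$, and the two sides are unrelated numbers (it certainly does not ``hold identically''); moreover, making every $y_i$ equal to $c$ forces all $l$ left-hand sides to coincide, i.e.\ $x_2=0$, which is impossible in $\mathbb{N}$. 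The correct bookkeeping is the one used in \eqref{1}--\eqref{2} and in the previous corollary: push the multiplicative element into the $x_j$'s and the additive sum into $y_i$, namely $x_1=a\,cw^m$, $x_2=d\,cw^m$, $x_{j+2}=b_j\,cw^m$, $y_i=(a+id+b_1+\cdots+b_{n-2})\cdot c$, $z=w$, so that both sides of row $i$ equal the single monochromatic element $(a+id+b_1+\cdots+b_{n-2})\cdot cw^m$.

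Once that is fixed, your ``cleanest route'' of shrinking $\mathcal{G}_2$ to singletons $\{cw^m\}$ turns out to discard exactly the information the corrected assignment needs. With $G_2=\{cw^m\}$, Theorem~\ref{notask} certifies only that $cw^m$ and its products with $F_1+F_2$ lie in $A_i$; it says nothing about $c$, about $w$, or about $(a+id+b_1+\cdots+b_{n-2})\cdot c$, yet these are the values of $z$ and of the $y_i$, which must also be monochromatic. You should instead take $\mathcal{G}_2$ to consist of full geometric progressions \emph{with the ratio included}, $\{c,w,cw,\dots,cw^m\}$; this family is multiplicative regular by the central-sets van der Waerden theorem transported through $n\mapsto 2^n$ (the paper makes the identical selection $\{a,d,ad,\dots,ad^n\}$ inside a multiplicatively central set later on). Then $w\in G_2\subseteq A_i$, $(F_1+F_2)\cdot c\subseteq (F_1+F_2)\cdot G_2\subseteq A_i$ handles the $y_i$, and the mixed sum--product clause of Theorem~\ref{notask} handles the $x_j$. (The same care is needed on the additive side: $x_2=d\,cw^m$ requires $d$ itself to be part of the additive configuration, so $\mathcal{F}_1$ should consist of the sets $\{a,d,a+d,\dots,a+ld\}$ rather than bare progressions; this point is also glossed over in the paper's previous corollary.) With these repairs your argument coincides with the paper's intended one.
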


\subsubsection{Polynomial Extension of Sum-Equals-Product Theorems}
We now extend the sum-equals-product phenomenon to a broader class of nonlinear systems involving polynomial perturbations. Specifically, we show that even when additive expressions are equated to products modified by polynomials, the system remains partition regular.

\begin{thm}[\textbf{Polynomial Sum-Equals-Product Theorem}]\label{task3}
    Let \( m, n, r \in \mathbb{N} \), and let \( \{P_i : i = 1, 2, \ldots, r\} \subset \Po \) be a finite collection of polynomials with zero constant term. Then the following system of nonlinear equations is partition regular:
    \begin{align*}
        x_1 + x_2 + \cdots + x_n &= y_1 y_2 \cdots y_m \cdot z_1 + P_1(y_{m+1}), \\
        x_1 + x_2 + \cdots + x_n &= y_1 y_2 \cdots y_m \cdot z_2 + P_2(y_{m+1}), \\
        &\vdots \\
        x_1 + x_2 + \cdots + x_n &= y_1 y_2 \cdots y_m \cdot z_r + P_r(y_{m+1}).
    \end{align*}
\end{thm}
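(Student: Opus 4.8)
The plan is to combine the polynomial van der Waerden theorem with the infinitary sum-equals-product machinery of Theorem~\ref{task2}, exactly as the polynomial extensions earlier in the section combine a regular family with a centrally rich (or merely ``centrally rich on one side'') ultrafilter. First I would pick an ultrafilter $p \in \overline{E(\beta\mathbb{N},+)} \cap E(\beta\mathbb{N},\cdot)$, whose existence is guaranteed by the remarks in Section~2, and fix $A \in p$. Running the proof of Theorem~\ref{task2}, I obtain sequences $\langle a_n\rangle_n$ and $\langle b_n\rangle_n$ together with the multiplicatively dilated structure: for every $N$, $\bigcup_{m=1}^N FS(\langle a_n\rangle_{n=1}^m)\cdot FP(\langle b_n\rangle_{n=m}^N)\subseteq A^\star \subseteq A$. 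The key observation is that the set of products $b_1 b_2 \cdots b_m$, as we are free to prepend finitely many $a_i$'s, gives us a quantity of the form $(\text{additive IP block})\cdot(b_1\cdots b_m)$ living inside a single colour class; this is the ``$y_1 y_2 \cdots y_m \cdot z$'' part.

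Next I would bring in the polynomials. Since $A^\star$ (equivalently $A^{\star\star}$) is additively central — it belongs to the additive idempotent $q$ — and additively central sets are additively piecewise syndetic, the polynomial van der Waerden theorem applies to $A^{\star\star}$: for the finite family $F = \{P_1, \ldots, P_r\} \subset \mathbb{P}$ there exist $a, d \in \mathbb{N}$ with $\{a + P_i(d) : i = 1, \ldots, r\} \subseteq A^{\star\star}$. Here $d$ plays the role of $y_{m+1}$. The plan is to arrange, by a careful ordering of the inductive choices in the Theorem~\ref{task2} construction, that the common left-hand value $x_1 + \cdots + x_n$ can be taken to be a single element $w$ of $A^{\star\star}$ which simultaneously equals $a + $ (some multiplicatively structured element). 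Concretely: first apply polynomial van der Waerden inside $A^{\star\star}$ to get $a$ and $d$; then, working in $A^\star \cap a^{-1}A^\star \cap \cdots$, choose the $b_i$'s so that $a \cdot b_1 \cdots b_m \in A^\star$ as well; then set each $z_i$ so that $y_1\cdots y_m\cdot z_i = (a\cdot b_1\cdots b_m) + (P_i(d) - P_1(d))\cdot(\text{correction})$ — this is where the argument needs the most care, because $P_i(d)+a$ is what lies in $A^{\star\star}$, not $a$ alone, and the product structure must absorb the differences $P_i(d)$.

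The cleanest way to handle that, and the route I would actually take, is to make the right-hand side carry the polynomial rather than trying to force cancellations: choose $w \in A^{\star\star}$ and then realise $w$ both as a finite sum $x_1 + \cdots + x_n$ (using the additive $FS$-structure, $n$ summands coming from $FS(\langle a_n\rangle)$ together with the translation freedom in $A^{\star\star}$) and, for each $i$, as $y_1 \cdots y_m \cdot z_i + P_i(y_{m+1})$. For the latter, having fixed $d = y_{m+1}$ from polynomial van der Waerden and having fixed the product $\pi = y_1 \cdots y_m = b_1\cdots b_m$, I need $w - P_i(d)$ to be a positive multiple of $\pi$ for every $i$ simultaneously; this is arranged by first choosing $\pi$, then invoking polynomial van der Waerden not on $A^{\star\star}$ directly but on the set $\{k : k\pi \in A^{\star\star}\} \cap \{k : k \text{ is a sum of } n \text{ elements all of whose } \pi\text{-multiples lie in } A^{\star\star}\}$ — which one checks is still additively piecewise syndetic because $A^{\star\star}$ is, and division by a fixed constant preserves that. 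So the main obstacle is purely bookkeeping: threading the order of quantifiers so that $\pi$ is fixed before the polynomial van der Waerden application, and verifying that the relevant ``divided'' set remains large enough; once that is set up, each row $x_1 + \cdots + x_n = w = \pi z_i + P_i(d)$ with $z_i = (w - P_i(d))/\pi$ is a monochromatic solution, and the colouring was arbitrary, so the system is partition regular. I would present this as a short argument citing Theorem~\ref{task2}'s construction verbatim for the product part and the polynomial van der Waerden theorem (via additive piecewise syndeticity of $A^{\star\star}$) for the polynomial part.
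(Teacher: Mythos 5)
Your overall strategy --- fix the product $\pi = y_1\cdots y_m$ first, then apply the polynomial van der Waerden theorem, then solve for each $z_i$ by division --- has the same skeleton as the paper's proof, and the identifications $y_{m+1}=d$ and $z_i=(w-P_i(d))/\pi$ match what the paper does. However, there are two genuine gaps. First, you take $p\in\overline{E(\beta\mathbb{N},+)}\cap E(\beta\mathbb{N},\cdot)$ and assert that $A^{\star\star}$ is additively central ``because it belongs to the additive idempotent $q$''. Membership in an arbitrary additive idempotent only makes a set an IP set; centrality --- equivalently, the additive piecewise syndeticity you actually need in order to invoke the polynomial van der Waerden theorem --- requires a \emph{minimal} idempotent. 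The paper works instead with a centrally rich ultrafilter $p\in\overline{E(K(\beta\mathbb{N},+))}\cap E(K(\beta\mathbb{N},\cdot))$, precisely so that every member of $p$ is additively central and the polynomial van der Waerden step is legitimate.

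Second, your key largeness claim --- that $\{k:k\pi\in A^{\star\star}\}$ is additively piecewise syndetic ``because division by a fixed constant preserves that'' --- is false in general: a syndetic set such as $\{k:k\equiv 1 \pmod{\pi}\}$ has empty preimage under multiplication by $\pi$. To make the divided set large one must first \emph{prepare} it multiplicatively, which is exactly what the paper does: using the multiplicative idempotence of $p$ it builds nested sets $B=A^\star\cap\bigcap_{x\in FS}x^{-1}A^\star$ and $C=B^\star\cap\bigcap_{y\in FP}y^{-1}B^\star$, all still members of $p$, so that $\pi\cdot C\subseteq A^\star$; then, rather than dividing the set, it divides the polynomials, applying polynomial van der Waerden inside $C$ to the family $\frac{1}{\pi}P_i\in\mathbb{P}$ to obtain $a-\frac{1}{\pi}P_i(d)\in C$, whence $\pi a-P_i(d)\in A^\star$. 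This same preparation settles a point you leave vague: the individual summands $x_j=a_j\, b_1\cdots b_{m-1}\, a$ lie in $A^\star$ because of the intersections defining $B$ and $C$, whereas realising $w$ ``using the additive FS-structure'' does not by itself place the summands in the colour class. With these two repairs your outline becomes the paper's proof; as written, the polynomial van der Waerden application and the divisibility step are not justified.
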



   \begin{proof}
We begin by assuming that \( m \geq 2 \).

Let \( p \) be a centrally rich ultrafilter and let \( A \in p \). Since \( p = p \cdot p \), it follows that the multiplicative star set
\[
A^\star := \{x \in A : x^{-1}A \in p\}
\]
also belongs to \( p \).

A well-known fact from ultrafilter theory asserts that
\[
\overline{K(\beta \mathbb{N}, \cdot)} \subseteq \overline{\mathrm{IP}_0},
\]
so every member of \( p \) contains finite products of some IP-sequence. In particular, since \( A^\star \) is multiplicatively piecewise syndetic, there exists a finite sequence \( \langle a_i \rangle_{i=1}^n \subset \mathbb{N} \) such that
\[
FS\left(\langle a_i \rangle_{i=1}^n\right) \subset A^\star.
\]

Define the set
\[
B := A^\star \cap \bigcap_{x \in FS\left(\langle a_i \rangle_{i=1}^n\right)} x^{-1}A^\star.
\]
Then \( B \in p \), and so \( B^\star \in p \) as well.

Now choose a finite sequence \( \langle b_j \rangle_{j=1}^{m-1} \) such that
\[
FP\left(\langle b_j \rangle_{j=1}^{m-1}\right) \subset B^\star.
\]

Define
\[
C := B^\star \cap \bigcap_{y \in FP\left(\langle b_j \rangle_{j=1}^{m-1}\right)} y^{-1}B^\star.
\]
Since \( C \in p \) and \( p \) is centrally rich, it follows that \( C \) is an additively central set.

 By the polynomial van der Waerden theorem, we can choose \( a, d \in \mathbb{N} \) such that
\[
\left\{ a,\, d,\, a - \frac{1}{(a_1 + \cdots + a_n) \cdot b_1 \cdots b_{m-1}} \cdot P_i(d) : i = 1, 2, \ldots, r \right\} \subset C.
\]
Thus,
\[
\left\{(a_1 + \cdots + a_n) \cdot b_1 \cdots b_{m-1} \cdot \left( a - \frac{1}{(a_1 + \cdots + a_n) \cdot b_1 \cdots b_{m-1}} \cdot P_i(d) \right) : i = 1, 2, \ldots, r \right\} \subset A^\star.
\]

Now define:
\begin{enumerate}
    \item For each \( i \in \{1, \ldots, r\} \),
    \[
    z_i := a - \frac{1}{(a_1 + \cdots + a_n) \cdot b_1 \cdots b_{m-1}} \cdot P_i(d),
    \]
    \item For each \( j \in \{1, \ldots, n\} \),
    \[
    x_j := a_j \cdot b_1 \cdots b_{m-1} \cdot a,
    \]
    \item For each \( k \in \{1, \ldots, m\} \),
    \begin{itemize}
        \item If \( k \neq m \), then \( y_k := b_k \),
        \item If \( k = m \), then \( y_m := a_1 + \cdots + a_n \),
    \end{itemize}
    \item Finally, let \( y_{m+1} := d \).
\end{enumerate}

 Now, for each \( i \in \{1, \ldots, r\} \), we compute:
\begin{align}
&(a_1 + \cdots + a_n)\cdot b_1 \cdots b_{m-1} \cdot \left( a - \frac{1}{(a_1 + \cdots + a_n)\cdot b_1 \cdots b_{m-1}} \cdot P_i(d) \right) \nonumber \\
&= (a_1 + \cdots + a_n)\cdot b_1 \cdots b_{m-1} \cdot a - P_i(d) \nonumber \\
&= x_1 + \cdots + x_n - P_i(y_{m+1}). \label{1}
\end{align}

But again,
\begin{align}
&(a_1 + \cdots + a_n)\cdot b_1 \cdots b_{m-1} \cdot \left( a - \frac{1}{(a_1 + \cdots + a_n)\cdot b_1 \cdots b_{m-1}} \cdot P_i(d) \right) \nonumber \\
&= (a_1 + \cdots + a_n)\cdot b_1 \cdots b_{m-1} \cdot z_i \nonumber \\
&= y_1 \cdots y_m \cdot z_i. \label{2}
\end{align}

Equating expressions \eqref{1} and \eqref{2}, we conclude that:
\[
x_1 + \cdots + x_n = y_1 \cdots y_m \cdot z_i + P_i(y_{m+1}),
\]
as required.

   Now for each \( i \in \{1,\ldots, r\} \), from equations \eqref{1} and \eqref{2}, we have
\[
x_1 + \cdots + x_n = y_1 \cdots y_m \cdot z_i + P_i(y_{m+1}).
\]

For the case \( m = 1 \), we interpret the product \( b_1 \cdots b_{m-1} \) as the empty product, i.e., equal to 1. Then the entire construction and argument go through verbatim.

\noindent This completes the proof.
\end{proof}

\begin{note}\label{note}
Note that in Theorem~\ref{task3}, we can rename \( y_1 \cdots y_m z_i \) as \( z_i \) to conclude that the following system of equations is partition regular:
\[
\begin{aligned}
x_1 + \cdots + x_n &= z_1 + P_1(z), \\
&\vdots \\
x_1 + \cdots + x_n &= z_r + P_r(z).
\end{aligned}
\]
In addition, for every subset \( F \subseteq \{1,2,\ldots,n\} \), the sum \( \sum_{t \in F} x_t \) also lies in the same cell of the partition.
\end{note}

\begin{thm}
The following result is a polynomial extension of the Sum-Equals-Product theorem in a different direction. We adapt the technique of the proof of Theorem~\ref{task3} to prove it.

Let \( n \in \mathbb{N} \), and let \( \{P_i : i = 1, 2, \ldots, n\} \subset \Po \) be a collection of polynomials. Then the system of equations 
\[
\begin{aligned}
x \cdot y &= z_1 + P_1(z), \\
x \cdot y^2 &= z_2 + P_2(z), \\
&\vdots \\
x \cdot y^n &= z_n + P_n(z)
\end{aligned}
\]
is partition regular. In addition, the set \( \{ x y^i : 1 \leq i \leq n \} \) also lies entirely within a single cell of the partition.
\end{thm}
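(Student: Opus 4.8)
The plan is to mimic the proof of Theorem~\ref{task3}, replacing the additive structure ``$x_1+\cdots+x_n$'' on the left with a geometric-progression structure ``$xy, xy^2, \ldots, xy^n$''. Let $p$ be a centrally rich ultrafilter and let $A\in p$. As before, set $A^\star=\{x\in A: x^{-1}A\in p\}\in p$. Since $p$ is centrally rich, $A^\star$ is multiplicatively central; in particular it is an $IP_0$ set, so for the fixed $n$ it contains an $IP_n$ set, say $FP\bigl(\langle c_j\rangle_{j=1}^{n}\bigr)\subset A^\star$. Actually, for this problem I only need a single geometric progression $\{xy, xy^2,\ldots,xy^n\}$ sitting inside $A^\star$; since multiplicatively central sets contain geometric progressions of every finite length (apply van der Waerden's theorem in $(\mathbb{N},\cdot)$, or equivalently the ordinary van der Waerden theorem transported by $n\mapsto 2^n$), I may directly pick $x,y\in\mathbb{N}$ with $\{xy^i : 1\le i\le n\}\subset A^\star$.

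Next I want to absorb the polynomial perturbations. Form
\[
C := A^\star \cap \bigcap_{i=1}^{n} (xy^i)^{-1} A^\star.
\]
Then $C\in p$, hence $C$ is additively central (again using that $p$ is centrally rich). By the polynomial van der Waerden theorem, there are $a,d\in\mathbb{N}$ with
\[
\left\{ a,\ d,\ a - \tfrac{1}{xy^i}\, P_i(d)\ :\ i=1,\ldots,n \right\} \subset C,
\]
provided each $\tfrac{1}{xy^i}P_i(d)$ is an integer; this is arranged exactly as in Theorem~\ref{task3} by clearing denominators, i.e.\ by running the polynomial van der Waerden theorem along the polynomials $P_i$ rescaled so that divisibility holds (one works in a suitable subsemigroup $N\mathbb{N}$, or replaces $P_i$ by $P_i$ evaluated at multiples of a fixed integer). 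Since $a - \tfrac{1}{xy^i}P_i(d)\in C$ and $a-\tfrac1{xy^i}P_i(d)\in A^\star$, multiplying by $xy^i$ (which lies in $A^\star$ with $(xy^i)^{-1}A^\star\ni a-\tfrac1{xy^i}P_i(d)$) gives
\[
xy^i\cdot\Bigl(a - \tfrac{1}{xy^i}P_i(d)\Bigr) = xy^i\cdot a - P_i(d)\in A^\star\subseteq A.
\]

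Finally I set the variables: replace $x$ by $xa$ and keep $y$ as $y$, so that the left-hand monomial becomes $(xa)y^i$; for each $i$ set $z_i := a - \tfrac{1}{xy^i}P_i(d)$ and $z:=d$. Then $(xa)y^i = xy^i\cdot a = \bigl(xy^i\cdot a - P_i(d)\bigr) + P_i(d) = xy^i z_i + P_i(z)$, which is the $i$-th equation with the renamed variables; and the common color class is witnessed by $A$, which contains every $xy^i a - P_i(d)$ as well as (taking $i$ over $1,\ldots,n$) the whole set $\{xay^i\}$ after noting $xay^i=xy^i z_i+P_i(z)\in A$. I should double-check that the renamed $\{x y^i\}$ — i.e.\ $\{(xa)y^i\}$ — genuinely lies in the cell, which it does since it equals the left-hand side of each equation. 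The main obstacle, as in Theorem~\ref{task3}, is the integrality/divisibility bookkeeping: ensuring $P_i(d)$ is divisible by $xy^i$ so that $z_i\in\mathbb{N}$ and the polynomial van der Waerden theorem can be legitimately invoked inside the semigroup; everything else is a routine transcription of the earlier argument, which is why I expect the write-up to say ``the technique of Theorem~\ref{task3} adapts verbatim'' after this divisibility point is handled.
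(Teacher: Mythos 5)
Your overall strategy coincides with the paper's: take a centrally rich ultrafilter $p$ and $A\in p$, use multiplicative centrality of $A^\star$ to plant a geometric structure, pull $A^\star$ back by those elements to get a set that is still in $p$ and hence additively central, apply the polynomial van der Waerden theorem to absorb the perturbations $\frac{1}{s}P_i$, and multiply back. The paper does exactly this, except that it invokes a stronger combinatorial input: a multiplicatively central set contains a configuration $F=\{a,d,ad,ad^2,\ldots,ad^n\}$ (the geometric progression \emph{together with} its first term and common ratio), then sets $B=A^\star\cap\bigcap_{s\in F}s^{-1}A^\star$, finds $b,c$ with $\{b,c,\,b-\tfrac{1}{s}P_i(c)\}\subset B$, and takes $x=ab$, $y=d$, $z=c$, $z_i=ad^ib-P_i(c)$.

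The point you skipped is not cosmetic: it is what makes the solution monochromatic in \emph{all} variables. You extract only the progression $\{xy^i:1\le i\le n\}$ from $A^\star$ via plain multiplicative van der Waerden, and your $C$ pulls back only by the elements $xy^i$. Consequently your argument never places $y$ in the color class at all, and never places the final first variable $xa$ there either (that would require $a\in x^{-1}A^\star$, i.e.\ $x^{-1}A^\star$ to be one of the sets intersected into $C$, which you cannot justify without first knowing $x\in A^\star$). Since partition regularity of the system means all of $x,y,z,z_1,\ldots,z_n$ lie in one cell, this is a genuine gap; the repair is exactly the paper's device of keeping $a$ and $d$ themselves inside $A^\star$, whence $y=d\in A^\star$ and $x=ab\in A^\star$ because $b\in B\subseteq a^{-1}A^\star$. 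Two smaller remarks: your assignment $z_i:=a-\tfrac{1}{xy^i}P_i(d)$ yields the equation $(xa)y^i=xy^i z_i+P_i(z)$, which is the shape of Theorem~\ref{task3} rather than of the present system; the correct choice is $z_i:=xy^ia-P_i(d)$, an element your own computation already shows lies in $A^\star$. And the divisibility bookkeeping you flag is handled in the paper just as in Theorem~\ref{task3}, by feeding the rational-coefficient polynomials $\tfrac{1}{s}P_i\in\Po$ directly into the polynomial van der Waerden theorem for central sets.
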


\begin{proof}
Let \( p \) be a centrally rich ultrafilter and \( A \in p \). Then \( A^\star \in p \). As \( A^\star \) is multiplicatively central, from \cite[Corollary 2.7]{tran}, there exist \( a, d \in \mathbb{N} \) such that 
\[
F = \{ a, d, ad, ad^2, \ldots, ad^n \} \subset A^\star.
\] 
Define 
\[
B = A^\star \cap \bigcap_{s \in F} s^{-1} A^\star \in p.
\] 
Now \( B \) is additively central (since \( p \) is centrally rich), so there exist \( b, c \in \mathbb{N} \) such that 
\[
\left\{ b, c, b - \frac{1}{s} P_i(c) : s \in F,\ i = 1, 2, \ldots, n \right\} \subset B.
\]

Now define:
\begin{enumerate}
    \item \( y = d \);
    \item \( z = c \);
    \item \( x = ab \).
\end{enumerate}

Then for every \( i = 1, 2, \ldots, n \), we define:
\[
\begin{aligned}
z_i &= ad^i \left( b - \frac{1}{ad^i} P_i(c) \right) \\
&= x y^i - P_i(z).
\end{aligned}
\]

Thus, each equation
\[
x y^i = z_i + P_i(z)
\]
is satisfied with all variables lying in the same color class. This completes the proof.
\end{proof}

The following corollary of the above theorem shows that rational functions involving polynomials are also partition regular.

\begin{cor}

Let \( n \in \mathbb{N} \), and \( P, Q \in \mathbb{P} \). Then the equation 
\[
\frac{x + P(d)}{y + Q(d)} = z^n
\]
is partition regular.

In fact, for any \( P, Q, R, S \in \mathbb{P} \), the equation
\[
\frac{w + P(d)}{x + Q(d)} = \left( \frac{y + R(d)}{z + S(d)} \right)^n
\]
is partition regular.
\end{cor}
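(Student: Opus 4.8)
The plan is to derive both statements from the previous theorem (the one giving partition regularity of $x\cdot y^i = z_i + P_i(z)$), using the same trick used to pass from Theorem~\ref{task3} to Note~\ref{note}. First I would treat the simpler equation $\frac{x+P(d)}{y+Q(d)} = z^n$. Given a finite coloring of $\mathbb{N}$, I want to produce a monochromatic solution, so I should look for $x, y, d, z$ all of the same color with $x + P(d) = z^n\bigl(y + Q(d)\bigr)$, i.e. $x + P(d) = z^n y + z^n Q(d)$. The idea is to apply the previous theorem with $n=2$ and a suitable choice of polynomials: set it up so that one of the monochromatic quantities $x'y'{}^i$ plays the role of $z^n$ times something. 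Concretely, rename variables: from the previous theorem I get a monochromatic solution to $x'y' = z_1' + P_1(w)$ and $x'y'^2 = z_2' + P_2(w)$ with $\{x'y', x'y'^2\}$ also monochromatic. Dividing the second equation by the first after a clever choice, $\frac{x'y'^2}{x'y'} = y'$, which is not yet an $n$th power, so instead I would run the previous theorem with the exponents $1$ and $n$ (the previous theorem allows $x y^i$ for $1 \le i \le n$, so in particular the pair $i=1$ and $i=n$ is available), getting monochromatic $x'y'$ and $x'y'^n$ with $x'y' = z_1' + P(w)$ and $x'y'^n = z_n' + Q'(w)$ for polynomials I get to choose. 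Then $\frac{x'y'^n}{x'y'} = y'^{n-1}$ — still not an $n$th power.

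So the correct setup is different: I would instead use exponents so that the ratio is exactly $z^n$. Take the previous theorem's conclusion that $x'y'$ and $x'y'^{n+1}$ are both monochromatic and satisfy $x'y' = a + P(w)$ and $x'y'^{n+1} = b + Q(w)$ (choosing $P_1 = P$, $P_{n+1} = Q$, and the remaining $P_i$ arbitrary, say zero). Then $\frac{x'y'^{n+1}}{x'y'} = y'^n$, which is an $n$th power; set $z = y'$. Now $x'y'^{n+1} = (a + Q(w))$... wait, I need the numerator and denominator of the target to themselves be of the form (color-class element) $+$ (polynomial in a common $d$). Renaming $w$ as $d$: the previous theorem gives $x'y'{}^{n+1} = z_{n+1} + Q(d)$ and $x'y' = z_1 + P(d)$, where $z_{n+1}, z_1, d$ lie in the same color class. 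Then
\[
z^n = y'^n = \frac{x'y'^{n+1}}{x'y'} = \frac{z_{n+1} + Q(d)}{z_1 + P(d)},
\]
which is exactly the desired equation with $w \mapsto z_{n+1}$, $x \mapsto z_1$ (matching the statement's roles of the numerator/denominator color-class variables). All of $z = y'$, $z_{n+1}$, $z_1$, and $d$ being monochromatic requires checking that $y' = d$ in the construction — indeed in the previous proof we set $y = d$, so $y' = d$ automatically, and we additionally need $d$ itself in the color class, which the polynomial van der Waerden step arranges ($c, b \in B$ there, and here $d$ plays the role of that $c$ or $d$). I would double-check the exact variable identifications against the previous proof, which is the one delicate bookkeeping point.

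For the second, more general equation $\frac{w + P(d)}{x + Q(d)} = \left(\frac{y + R(d)}{z + S(d)}\right)^n$, I would apply the previous theorem with four targeted exponents. The right idea: request that $x'y'$, $x'y'^{k}$, $x'y'^{\ell}$, $x'y'^{m}$ all be monochromatic (for suitable exponents chosen so that $\frac{x'y'^m / x'y'}{x'y'^\ell / x'y'^k} = \frac{x'y'^{m-1}}{x'y'^{\ell-k}}$... ) — more cleanly, note that any power $y'^j$ arises as a ratio $x'y'^{j+1}/x'y'$, so $\left(\frac{x'y'^{j+1}}{x'y'}\right)^{?}$... Actually simplest: both sides should equal $y'^{n}$ — no. The slick route: pick the previous theorem with five of its $z_i$'s, arrange via the polynomial van der Waerden step that $w + P(d)$, $x + Q(d)$, $y + R(d)$, $z + S(d)$ are (after multiplying through by the fixed integer $s \in F$) respectively $x'y'^{a}$, $x'y'^{b}$, $x'y'^{c}$, $x'y'^{d'}$ for exponents with $a - b = n(c - d')$, and with the common variable $d = c$ (the geometric-progression ratio) shared across all four. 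Then
\[
\frac{w + P(d)}{x + Q(d)} = \frac{x'y'^{a}}{x'y'^{b}} = y'^{\,a-b} = \bigl(y'^{\,c-d'}\bigr)^{n} = \left(\frac{x'y'^{c}}{x'y'^{d'}}\right)^{n} = \left(\frac{y + R(d)}{z + S(d)}\right)^{n}.
\]
Choosing $a = n+1$, $b = 1$, $c = 2$, $d' = 1$ gives $a - b = n = n(c-d')$, so I only need the exponents $1$, $2$, and $n+1$, all $\le n+1$, which the previous theorem supplies.

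The main obstacle I anticipate is purely bookkeeping: making sure that in the previous theorem's construction the element $d$ that appears as the polynomial argument is the \emph{same} $d$ across all the equations and that it, together with every $z_i$ I use, genuinely lands in one color class — and that the "$+$ polynomial" terms have exactly the shape demanded (the $\frac{1}{s}P_i(c)$ rescaling in that proof must be absorbed correctly, since here the $P, Q, R, S$ are prescribed by the statement rather than chosen by us, so I must feed $s \cdot P$, $s\cdot Q$, etc.\ into the previous theorem as its $P_i$'s). Once the identifications are pinned down, the algebra is the trivial cancellation displayed above, so no hard estimates are involved; the whole proof is a reduction. I would present it as two short paragraphs, one per equation, explicitly listing the variable substitutions and citing the previous theorem and the polynomial van der Waerden theorem for the color-class membership.
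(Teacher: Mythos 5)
The paper gives no proof of this corollary at all---it is presented as an immediate consequence of the preceding theorem---and your reduction is exactly the intended one: take two (resp.\ four) of the equations $x'y'^{\,i}=z_i+P_i(z)$ with suitably chosen exponents and polynomials, and divide. Your treatment of the first equation is correct: applying the theorem with parameter $n+1$, $P_1:=Q$, $P_{n+1}:=P$ and the rest zero gives monochromatic $y',z_1,z_{n+1},d$ with $\frac{z_{n+1}+P(d)}{z_1+Q(d)}=y'^{\,n}$, which is the claim. One remark: your worry about feeding $s\cdot P$ into the theorem is unnecessary, since the theorem's \emph{statement} already delivers $x y^i = z_i + P_i(z)$ with $P_i$ exactly as supplied; the $\frac{1}{s}P_i(c)$ rescaling is internal to its proof.

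There is one small slip in the second part: your choice $a=n+1$, $b=1$, $c=2$, $d'=1$ uses the exponent $1$ twice, once with polynomial $Q$ and once with $S$, whereas the theorem as stated attaches only \emph{one} polynomial to each exponent $i$. This is repairable in either of two trivial ways: choose four pairwise distinct exponents satisfying $a-b=n(c-d')$, e.g.\ $a=n+1$, $b=1$, $c=n+3$, $d'=n+2$ (apply the theorem with parameter $n+3$); or observe that the theorem's proof extends verbatim to several polynomials per exponent, since one merely adds the corresponding elements $b-\frac{1}{s}P(c)$ to the finite configuration requested from the central set. With that adjustment the argument is complete.
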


\subsection{Nonlinear Rado Systems are Partition Regular}

Now we prove that nonlinear Rado systems, as defined earlier, are indeed partition regular.

\begin{thm}\label{ho}
    Nonlinear Rado systems are partition regular.
\end{thm}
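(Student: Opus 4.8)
The plan is to mimic the strategy behind Theorem~\ref{task3}: work inside a centrally rich ultrafilter, extract a multiplicatively closed ``star'' set inside the monochromatic cell, first solve the purely linear part of the system there using the column condition, and then use the polynomial van der Waerden theorem to graft on the terms $P_i(z)$ by perturbing a rescaled copy of the additive structure.

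First I would fix a finite coloring $\mathbb{N}=\bigcup_{\ell=1}^{r}A_\ell$, pick a centrally rich ultrafilter $p$, and choose $\ell$ with $A:=A_\ell\in p$; throughout we use that each row genuinely contains the term $a_{i,n}\cdot y_i$, i.e.\ $a_{i,n}\ne 0$. Since $p=p\cdot p$, the multiplicative star $A^\star:=\{x\in A:x^{-1}A\in p\}$ lies in $p$, it is additively central because $p$ is centrally rich, and by \cite[Lemma 4.14]{book} (applied multiplicatively) we have $x^{-1}A^\star\in p$ for every $x\in A^\star$. Applying the central-sets form of Rado's theorem (a matrix satisfying the column condition has a solution of $A\vec X=0$ with all coordinates in any prescribed additive central set; see e.g.\ \cite{book}) to the matrix $A$ and the central set $A^\star$, I obtain $x_1^0,\dots,x_{n-1}^0,u^0\in A^\star$ with
\[
\sum_{j=1}^{n-1}a_{i,j}x_j^0+a_{i,n}u^0=0\qquad(1\le i\le m),
\]
which is the linear Rado system with the usual common substitution $y_1=\dots=y_m=u^0$.

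Next I would put $B:=A^\star\cap\bigcap_{s\in\{x_1^0,\dots,x_{n-1}^0,u^0\}}s^{-1}A^\star$; each set in this finite intersection is a member of $p$, so $B\in p$ and $B$ is additively central. Set $\tilde Q_i:=-\tfrac{1}{a_{i,n}u^0}P_i\in\mathbb{P}$. The polynomial van der Waerden theorem, in the central-set form already used in the proof of Theorem~\ref{task3}, then produces $a,d\in B$ with $a+\tilde Q_i(d)\in B$ for every $i$; since $a+\tilde Q_i(d)\in B\subseteq\mathbb{N}$, automatically $\tilde Q_i(d)\in\mathbb{Z}$, so there are no integrality issues. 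I would then declare
\[
x_j:=x_j^0\cdot a\ \ (1\le j\le n-1),\qquad z:=d,\qquad y_i:=u^0\cdot\bigl(a+\tilde Q_i(d)\bigr)=u^0a-\frac{1}{a_{i,n}}P_i(d)\ \ (1\le i\le m).
\]
These are positive integers, and all lie in $A$: indeed $x_j=x_j^0a\in A^\star$ since $a\in B\subseteq (x_j^0)^{-1}A^\star$, $y_i=u^0(a+\tilde Q_i(d))\in A^\star$ since $a+\tilde Q_i(d)\in B\subseteq (u^0)^{-1}A^\star$, and $z=d\in B\subseteq A$. Finally, using $\sum_{j}a_{i,j}x_j^0+a_{i,n}u^0=0$,
\[
\sum_{j=1}^{n-1}a_{i,j}x_j+a_{i,n}y_i+P_i(z)=a\Bigl(\sum_{j=1}^{n-1}a_{i,j}x_j^0+a_{i,n}u^0\Bigr)-P_i(d)+P_i(d)=0,
\]
so every equation is satisfied.

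The hard part, and the reason for this particular ordering, is keeping the linear and polynomial pieces compatible. One cannot first run van der Waerden and then solve the linear system with the base point $a$ fixed, nor fix the linear solution and then demand van der Waerden with a prescribed base point; so the linear solution must be produced \emph{inside} the multiplicatively closed central set $A^\star$, after which multiplying the whole additive structure by the van der Waerden base point $a$ keeps everything in the cell $A$ --- this double largeness is exactly what the centrally rich ultrafilter supplies. Everything else (positivity, integrality, the clearing of $a_{i,n}$) is routine. If one prefers to phrase the linear step through the matrix $E(A)$ rather than through the substitution $y_1=\dots=y_m=u^0$, it suffices to note the elementary fact that $E(A)$ inherits the column condition from $A$: in a column-condition partition for $A$, replace the index of the last column by the block of the $m$ new columns, whose sum is precisely the last column of $A$.
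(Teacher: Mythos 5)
Your proposal is correct and follows essentially the same route as the paper: find a kernel solution of the linear part inside the multiplicative star set of a member of a centrally rich ultrafilter, intersect with the relevant multiplicative translates, and then use the polynomial van der Waerden theorem in that additively central set to absorb the $P_i(z)$ terms via the substitution $y_i = u^0 a - \frac{1}{a_{i,n}}P_i(d)$. Your justification of the linear step (central-sets Rado applied to the additively central set $A^\star$) is if anything cleaner than the paper's, but the argument is the same.
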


\begin{proof}
Let \( A = (a_{i,j}) \) be an \( m \times n \) matrix satisfying the conditions of Definition~\ref{newdef}.  
Let \( p \) be a centrally rich ultrafilter. Since \( p \in K(\beta \mathbb{N}, \cdot) \), every set in \( p \) contains a solution \( \vec{X} \in \mathbb{N}^n \) to the homogeneous linear system \( A\vec{X} = \vec{0} \).

Fix \( B \in p \), and define
\[
B^\star := \{ n \in \mathbb{N} : n^{-1}B \in p \} \in p.
\]
Then \( B^\star \in p \), and there exists \( \vec{X} = \begin{bmatrix} x_1 \\ x_2 \\ \vdots \\ x_n \end{bmatrix} \in (B^\star)^n \) such that \( A\vec{X} = \vec{0} \).

Now define
\[
C := B^\star \cap \bigcap_{i=1}^n x_i^{-1} B^\star \in p.
\]

Let \( P_1, \ldots, P_m \in \mathbb{P} \) be given polynomials. Since \( C \) is additively central, there exist \( a, d \in \mathbb{N} \) such that
\[
\left\{ a, d,\ a - \frac{1}{a_{i,n} x_n} P_i(d) : 1 \leq i \leq m \right\} \subseteq C.
\]

Now for each \( i = 1, 2, \ldots, m \), compute:
\[
\begin{aligned}
& a_{i,1} a x_1 + \cdots + a_{i,n-1} a x_{n-1} + a_{i,n} x_n \left( a - \frac{1}{a_{i,n} x_n} P_i(d) \right) \\
&= a_{i,1} a x_1 + \cdots + a_{i,n-1} a x_{n-1} + a_{i,n} a x_n - P_i(d) \\
&= a \left( a_{i,1} x_1 + \cdots + a_{i,n} x_n \right) - P_i(d) \\
&= 0 - P_i(d) = -P_i(d).
\end{aligned}
\]

Thus, each row of the nonlinear Rado system is satisfied, and all relevant elements lie in the same color class determined by \( p \). Hence, the system is partition regular.
\end{proof}

\begin{example}
As a direct consequence of the previous theorem, the following nonlinear system of equations is partition regular:
\[
\begin{aligned}
&x_1 + 2x_2 - 3y_1 + (z^2 + z) = 0, \\
&2x_1 - x_2 - y_2 + z^3 = 0.
\end{aligned}
\]
\end{example}

\section{Proof of the Necessary Condition}

In this section, we prove Theorem~\ref{N2}. Our proof adapts the classical argument of Rado, as found in~\cite{Graham, Moreira}. The main idea involves the use of the $p$-adic representation of natural numbers. Before presenting the full proof, we require the following preparatory lemma.

\begin{lem}\textup{(\cite[Chapter 3, Lemma 6, Page: 74]{Graham})}\label{N1}
  Let \( j, k \in \mathbb{N} \), and let \( c_1, \dots, c_j \in \mathbb{Z}^k \) be vectors such that \( c_1 \) is not a rational linear combination of the (possibly empty) set \( \{c_2, \dots, c_j\} \). Then there exists a finite set of primes \( F \) such that for any prime \( p \notin F \) and any non-negative integer \( n \), the vector \( p^n c_1 \) is not a linear combination of \( c_2, \dots, c_j \mod p^{n+1} \).

\end{lem}

Now we start our proof.

\begin{proof}[Proof of Theorem \ref{N2}]
Let \( P = \{P_1, \ldots, P_m\} \subset \mathbb{P} \) be a finite set of polynomials, and let \( A = (a_{ij}) \in \mathbb{Z}^{m \times l} \) be an integer matrix. Suppose that for every finite coloring of \( \mathbb{N} \), there exists a monochromatic set \( \vec{X} = \{x_1, \ldots, x_l\} \cup \{z\} \) such that the system
\[
A \vec{X} + P(z) = \vec{0}
\]
holds.

Let \( c_1, \dots, c_{l+n} \) denote the columns of the augmented matrix \( A_{\mathrm{aug}}(P) \). Consider any two disjoint subsets \( I, J \subset \{c_1, \dots, c_{l+n}\} \) with \( I \) nonempty. Then, either the sum \( \sum_{c \in I} c \) lies in the \( \mathbb{Q} \)-linear span of \( J \), or Lemma~\ref{N1} applies and yields a finite set of primes \( F_{I,J} \) satisfying its conclusion. Define \( F \) to be the (finite) union of all such sets \( F_{I,J} \), taken over all choices of disjoint subsets \( I, J \) with \( I \neq \emptyset \). Let \( p \) be any prime not belonging to \( F \).

Let \( \pi : \mathbb{Z} \to \mathbb{Z}/(p\mathbb{Z}) \) denote the natural projection. Define a coloring \( \chi_p : \mathbb{N} \to \{1, \dots, p-1\} \) recursively as follows:
\[
\chi_p(x) = \begin{cases}
\pi(x) & \text{if } \pi(x) \neq 0, \\
\pi(x/p) & \text{if } \pi(x) = 0.
\end{cases}
\]

Let \( (x_1, \dots, x_{l+1}) \in \mathbb{N}^{l+1} \) be a monochromatic vector such that, defining \( \vec{X} = (x_1, \dots, x_l) \) and \( z = x_{l+1} \), we obtain a solution to the equation \( A\vec{X} + P(z) = \vec{0} \). Then there exists some \( a \in \{1, \dots, p-1\} \) such that for each \( i \), the congruence \( x_i \equiv a p^{n_i} \mod p^{n_i+1} \) holds for some \( n_i \in \{0, 1, \dots\} \).

Let \( s+1 \) be the number of distinct values in the set \( \{n_i : i = 1, \dots, l+1\} \). After reordering the columns of \( A \), if needed, we can choose indices \( 0 = d_0 < d_1 < d_2 < \cdots < d_s < d_{s+1} = l+1\) such that
\[
n_i = \begin{cases}
b_0 & \text{if } d_0 < i \leq d_1, \\
b_1 & \text{if } d_1 < i \leq d_2, \\
\vdots & \vdots \\
b_s & \text{if } d_s < i \leq d_{s+1},
\end{cases}
\]
for some strictly increasing sequence \( b_0 < b_1 < \cdots < b_s \).

For each \( 1 \leq i \leq s+1 \), define the index set \( I_i = [d_{i-1}+1, d_i] \). Then
\[
0 = \sum_{i \in I_1} x_i c_i = a p^{b_0} \sum_{i \in I_1} c_i \pmod{p^{b_0+1}}.
\]

When $c_{l+1}\neq \vec 0,$ since higher-degree terms (such as \( x_{l+1}^2, x_{l+1}^3, \dots \)) do not contribute modulo \( p^{b_0+1} \), the computation of the sum over \( I_1 \) is straightforward.
Choosing the prime $p$ enough large, we have condition $(1 ).$

For the case \( c_{l+1} = \vec{0} \), the calculations proceed analogously. The only distinction is that the nonlinear part may contribute terms of the form \( a^i \) for \( i \neq 1 \), while the linear part contributes the coefficient \( a \). Here we need to consider \(\pmod{p^{ib_j+1}} \) for some $0\leq j\leq s.$ Using Lemma \ref{N1}, this gives $(1').$ For detailed clarification see the Note \ref{note}.


For the general case, first define \( c_{t_0} = \vec{0} \). Now for every \( j > 1 \), there exists \( l+2 = t_1 \leq t_k \leq l + n \) such that
\begin{align*}
    0 &= \sum_{i=1}^{l+1} x_i c_i + \sum_{i=2}^{n} c_{l+i} x_{l+1}^{i} \\
      &\equiv \sum_{i=1}^{d_j} x_i c_i + a p^{b_j} \sum_{i=d_j + 1}^{d_{j+1}} c_i + \sum_{i=0}^{k} c_{t_i} x_{l+1}^{i+1} \pmod{p^{b_j+1}}.
\end{align*}

Hence,
\[
    (-a) p^{b_j} \sum_{i \in I_{j+1}} c_i \equiv \sum_{i \in I_1 \cup \cdots \cup I_j} x_i c_i + \sum_{i=0}^{k} c_{t_i} x_{l+1}^{i+1} \pmod{p^{b_j+1}}.
\]

Let \( \tilde{a} \in \mathbb{Z} \) be an integer such that \( a \tilde{a} \equiv 1 \pmod{p^{b_j+1}} \). (Such an integer exists because \( a \) is coprime to \( p \), and hence also to \( p^{b_j+1} \).) Then we obtain the following congruence:
\begin{equation}\label{eqn1}
    p^{b_j} \sum_{i \in I_{j+1}} c_i \equiv -\tilde{a} \sum_{i \in I_1 \cup \cdots \cup I_j} x_i c_i - \tilde{a} \sum_{i=0}^{k} c_{t_i} x_{l+1}^{i+1} \pmod{p^{b_j+1}}.
\end{equation}

Again, suppose \( x_{l+1} \in I_{j+1} \) for some \( 0 \leq j \leq s \). Then \( x_{l+1} = a p^{b_j} \). Let \( 2 \leq u \leq n \). Then there exists \( 0 \leq k \leq s \) such that
\begin{align*}
    0 &= \sum_{i=1}^{l+1} x_i c_i + \left( c_{l+2} x_{l+1}^2 + \cdots + c_{l+n} x_{l+1}^n \right) \\
      &= \sum_{i=1}^{d_{k+1}} x_i c_i + c_{l+2} x_{l+1}^2 + \cdots + c_{l+u} x_{l+1}^u \pmod{p^{u b_j + 1}} \\
      &\equiv \sum_{i=1}^{d_{k+1}} x_i c_i + a^2 c_{l+2} p^{2 b_j} + \cdots + a^u c_{l+u} p^{u b_j} \pmod{p^{u b_j + 1}}.
\end{align*}

Again, since \( (a, p) = 1 \), we let \( \tilde{a} \) denote the inverse of \( a^u \) modulo \( p^{u b_j + 1} \). Then we obtain the congruence
\begin{equation}\label{eqn2}
    p^{u b_j} c_{l+u} \equiv -\tilde{a} \left( \sum_{i=1}^{d_{k+1}} x_i c_i + a^2 c_{l+2} p^{2 b_j} + \cdots + a^{u-1} c_{l+u-1} p^{(u-1) b_j} \right) \pmod{p^{u b_j + 1}}.
\end{equation}

Since \( p \notin F \), it follows from the definition of \( F \) that the previous congruences—equations \eqref{eqn1} and \eqref{eqn2}—imply the following:
\[
c_{d_j+1} + c_{d_j+2} + \cdots + c_{d_{j+1}} \in \operatorname{span}_{\mathbb{Q}} \left( \{ c_i : i \leq d_j \} \cup \{ c_{t_i} : 0 \leq i \leq k \} \right),
\]
and for each \( 2 \leq u \leq n \),
\[
c_{l+u} \in \operatorname{span}_{\mathbb{Q}} \left( \{ c_i : i \leq d_k \} \cup \{ c_{l+1}, c_{l+2}, \dots, c_{l+u-1} \} \right).
\]

\end{proof}

\begin{note}\label{note}
\textbf{Clarification of the proof of (1'):}  
To clarify the argument, let us consider a special case that may arise in the proof. Suppose \( x_{l+1} \equiv a p^{b_0} \pmod{p^{b_0+1}} \) is the only nonzero component modulo \( p^{b_0+1} \), and assume that \( c_{l+1} = \vec{0} \), yet \( c_{l+1} \neq \vec{0} \). Then there must exist some variables \( x_i \) with \( b_1 = 2b_0 \); that is, \( x_i \equiv a p^{2b_0} \pmod{p^{2b_0+1}} \).

On the other hand, from
\[
x_{l+1} \equiv a p^{b_0} \pmod{p^{b_0+1}} \quad \Rightarrow \quad x_{l+1} = a p^{b_0} + r p^{b_0+1}
\]
for some integer \( r \), it follows that
\[
x_{l+1}^2 \equiv a^2 p^{2b_0} \pmod{p^{2b_0+1}}.
\]

Therefore, the equation becomes
\[
\sum_{i \in I_2} c_i a p^{2b_0} + a^2 p^{2b_0} \equiv 0 \pmod{p^{2b_0+1}}.
\]
This suggests that the sum of the columns need not vanish exactly, but instead corresponds to a rational linear combination. Hence, Lemma~\ref{N2} implies condition (1') in this setting.

This reasoning also carries over to the next result.
\end{note}

\subsection{Necessary Condition for Systems of Nonlinear Inhomogeneous Equations}

Let \( m, l \in \mathbb{N} \), and let \( \langle F_i(x_1, \ldots, x_l) \rangle_{i=1}^m \) be a system of inhomogeneous equations — i.e., there is no fixed degree. Define the set of monomial indices
\[
B = \left\{ (i_1, \ldots, i_l) \in \mathbb{N}^l : \text{ the monomial } x_1^{i_1} \cdots x_l^{i_l} \text{ appears in some } F_i \right\}.
\]
Then \( B \) is finite. Define a matrix \( A\left( \langle F_i(x_1, \ldots, x_l) \rangle_{i=1}^m \right) \) where each column corresponds to a monomial from \( B \), and the \( i \)-th row records the coefficient of that monomial in \( F_i \).

Let \( C \) denote the set of columns of this matrix. Suppose \( |C| = l \), and denote the columns by \( c_1, c_2, \ldots, c_l \).

Then, an adaptation of the proof of Theorem~\ref{N2} yields the following necessary condition:

\begin{thm}\label{just1}
Let \( m, l \in \mathbb{N} \), and let \( \langle F_i(x_1, \ldots, x_l) \rangle_{i=1}^m \) be a system of inhomogeneous equations. Suppose that for every finite coloring of \( \mathbb{N} \), the system
\begin{align*}
    F_1(x_1, \ldots, x_l) &= 0, \\
    &\vdots \\
    F_m(x_1, \ldots, x_l) &= 0
\end{align*}
admits a monochromatic nonconstant solution \( (x_1, \ldots, x_l) \in \mathbb{N}^l \). Then there exists a nonempty set \( I_1 \subseteq \{1, 2, \ldots, l\} \), $s\in \N$ such that:
\begin{enumerate}
\item $\{1,2,\ldots ,l\}=\bigcup_{j=1}^sI_j;$
    \item $\vec 0 \in \operatorname{span}_{\mathbb{Q}} \left( \{ c_i : i \in I_1 \} \right);$ and
    \item for every \(2\leq j\leq s\), and for each $k\in I_j,$ 
    \[
    c_k \in \operatorname{span}_{\mathbb{Q}} \left( \left\lbrace c_i : i \in \bigcup_{l=1}^{j-1}I_l\cup (I_j\setminus \{k\}) \right\rbrace \right).
    \]

\end{enumerate}

Note that if the system is homogeneous of a fixed degree, then condition $(1)$ reduces to $\vec 0=\sum_{i\in I_1}c_i.$
\end{thm}

\section{Further Possibilities}

Our current treatment is restricted to systems with finitely many equations and variables. However, infinite linear systems have been studied extensively (see \cite{new1}). A natural extension would be to ask:

\begin{quote}
    \textit{Is it possible to identify infinite nonlinear Rado systems that remain partition regular?}
\end{quote}

   In \cite{HL06, HM}, the authors studied monochromatic solutions to systems of linear equations of the form \( A\vec{X} = \vec{b} \). They showed that such a system is partition regular—that is, it admits a monochromatic solution under every finite coloring of \( \mathbb{N} \)—if and only if it has a constant solution; that is, there exists \( d \in \mathbb{Q} \) such that  
\[
A \begin{pmatrix}
d \\
d \\
\vdots \\
d
\end{pmatrix} = \vec{b}.
\]  
A natural question arises: 

\begin{quote}
{\it Can one formulate a nonlinear analogue of this result?}
\end{quote}

\section*{Acknowledgement}  I sincerely thank Dibyendu De for his valuable suggestions on the previous draft of this paper. I also gratefully acknowledge the support of the NBHM Postdoctoral Fellowship (Reference No: 0204/27/(27)/2023/R \& D-II/11927).

\end{document}